\let\le\undefined
\DeclareMathSymbol{\le}{\mathrel}{AMSa}{"36}         
\let\ge\undefined
\DeclareMathSymbol{\ge}{\mathrel}{AMSa}{"3E}         
\DeclareMathOperator{\chr}{char}
\DeclareMathOperator{\Gal}{Gal}
\DeclareMathOperator{\Ann}{Ann}
\DeclareMathOperator{\Hom}{Hom}
\DeclareMathOperator{\Tor}{Tor}
\DeclareMathOperator{\Br}{Br}
\DeclareMathOperator{\tr}{tr}
\newcommand{\Z}{{\mathbb Z}}
\newcommand{\Q}{{\mathbb Q}}
\newcommand{\F}{{\mathbb F}}
\newcommand{\R}{{\mathbb R}}
\newcommand{\C}{{\mathbb C}}
\def\ot{\DOTSB\otimes}
\def\sub{\DOTSB\subset}
\def\rarrow{\DOTSB\longrightarrow}
\def\lrarrow{\DOTSB\.\relbar\joinrel\relbar\joinrel\rightarrow\.}
\newcommand{\bu}{\bullet}
\newcommand{\dsb}{\dotsb}
\newcommand{\dsc}{\dotsc}
\newcommand{\dsm}{\dotsm}
\newcommand{\oK}{\,\overline{\!K}}
\newcommand{\KM}{\mathrm{K}^{\mathrm{M\mskip-.2\thinmuskip\relax}}}
\newcommand{\+}{\nobreakdash-}
\renewcommand{\:}{\colon}
\renewcommand{\;}{,\medspace}
\renewcommand{\.}{\mskip .5\thinmuskip\relax}
\newcommand{\gr}{{\mathrm{gr}}}
\newcommand{\q}{{\mathrm{q}}}
\newcommand{\ab}{{\mathrm{ab}}}
\newcommand{\Section}[1]{\bigskip\section{#1}\medskip}
\theoremstyle{plain}
\newtheorem*{kc}{Koszulity Conjecture}
\newtheorem{mkc}{Module Koszulity Conjecture}
\newtheorem*{thm}{Theorem}
\newtheorem*{thm1}{Theorem 1}
\newtheorem*{thm2}{Theorem 2}
\newtheorem*{lem}{Lemma}
\newtheorem*{lem1}{Lemma 1}
\newtheorem*{lem2}{Lemma 2}
\newtheorem*{lem3}{Lemma 3}
\newtheorem*{prop}{Proposition}
\theoremstyle{definition}
\newtheorem*{rem}{Remark}
\begin{document}

\title{Galois Cohomology of a Number Field is Koszul}
\author{Leonid Positselski}

\address{Faculty of Mathematics and Laboratory of Algebraic
Geometry, National Research University Higher School of Economics,
Moscow 117312; and \newline
\indent Sector of Algebra and Number Theory, Institute for
Information Transmission Problems, Moscow 127994, Russia}

\email{posic@mccme.ru}

\begin{abstract}
 We prove that the Milnor ring of any (one-dimensional) local or
global field $K$ modulo a prime number~$l$ is a Koszul algebra
over~$\Z/l$.
 Under mild assumptions that are only needed in the case $l=2$,
we also prove various module Koszulity properties of this algebra.
 This provides evidence in support of Koszulity conjectures for
arbitrary fields that were proposed in our previous papers.
 The proofs are based on the Class Field Theory and computations
with quadratic commutative Gr\"obner bases (commutative PBW-bases).
\end{abstract}


\maketitle

\tableofcontents

\section*{Introduction}
\medskip

\setcounter{subsection}{-1}
\subsection{{}}
 Let $K$ be a field and $l\ne\chr K$ be a prime number.
 The well-known \emph{Milnor--Bloch--Kato} conjecture claims
that the natural morphism of graded $\Z/l$\+algebras, called
the \emph{Galois symbol}, or the \emph{norm residue homomorphism},
$$
 \KM(K)/l\lrarrow \textstyle\bigoplus_n H^n(G_K,\mu_l^{\ot n})
$$
is an isomorphism.
 Here $\KM(K)$ denotes the Milnor K\+theory ring of $K$, \ 
$G_K=\Gal(\oK/K)$ is the absolute Galois group of $K$, and
$\mu_l$ is the group of $l$\+roots of unity in $\oK$.
 For algebraic number fields and their functional analogues,
this conjecture was proven by J.~Tate in~\cite{Tate}; see
also~\cite{BT}.
 For arbitrary fields, it was established by A.~Merkurjev and
A.~Suslin~\cite{MS} in the degree $n=2$; the prolonged work on
a complete proof was recently finished by M.~Rost, V.~Voevodsky,
and collaborators~\cite{Voev}.

\subsection{{}}
 Another approach to proving this conjecture was suggested in
our paper~\cite{PV}.
 There it was shown that the Milnor--Bloch--Kato conjecture would
follow from its low-degree part if one knew the quadratic algebra
$\KM(K)/l$ to be \emph{Koszul}.
 The argument in~\cite{PV} was only directly applicable to
fields $K$ having no algebraic extensions of degree prime to~$l$;
it is well-known that it suffices to prove the Milnor--Bloch--Kato
conjecture for such fields.
 The proper scope of the Koszulity conjecture was demonstrated
in~\cite{Pober,Partin}, where a motivic interpretation of it was
found in the case when $K$ contains a primive $l$\+root of unity.

 A positively graded associative algebra $A=k\oplus A_1\oplus
A_2\oplus\dsb$ over a field~$k$ is called \emph{Koszul}
\cite[Subsection~2.2]{PV} if the groups $H_{ij}(A)=\Tor_{ij}^A(k,k)$
vanish for all $i\ne j$, where the first grading $i$ is
the homological grading and the second grading $j$ is
the \emph{internal} grading, which is induced from the grading of~$A$.
 This definition, introduced originally for algebras with
(locally) finite-dimensional components by S.~Priddy~\cite{Pr}
(see~\cite{BGSoe} or~\cite{PP} for a detailed treatment), was
extended to the infinite-dimensional case in~\cite{PV}.
 In particular, the conditions on $H_{ij}(A)$ for $i=1$ and~$2$
mean that the algebra $A$ is quadratic, i.~e., generated by $A_1$
with relations of degree~$2$.

\begin{kc}
 For any field $K$ containing a primitive root of unity of a prime
degree~$l$, the algebra $\KM(K)/l$ is Koszul.
\end{kc}

 Another formulation of the Koszulity conjecture is the assertion
that the Galois cohomology algebra $\bigoplus_n H^n(G_K,\mu_l^{\ot n})$
is Koszul, in the same assumptions on $K$ and~$l$.
 In view of the Merkurjev--Suslin theorem, this formulation clearly
implies the Milnor--Bloch--Kato conjecture.
 By the result of~\cite{PV}, the former formulation also implies
the Milnor--Bloch--Kato, provided that one knows just a little bit
more than the Merkurjev--Suslin theorem.

\subsection{{}}
 Further and stronger versions of the Koszulity conjecture were
proposed in the papers~\cite{Pbogom,Pdivis}.
 Assume that either $l$~is odd and the field $K$ contains a primitive
$l$\+root of unity, or $l=2$ and the field $K$ contains a square
root of~$-1$ (we always presume that $\chr K\ne 2$ when speaking of
the square root of~$-1$).
 Then the Milnor algebra $\KM(K)/l$ is the quotient algebra of
the exterior algebra $\Lambda_{\Z/l}(K^*/K^{*l})$ by its ideal $J_K$
generated by the Steinberg symbols.
 It was shown in~\cite{Pbogom} that if the ideal $J_K$ is
a \emph{Koszul module} (in the appropriately shifted grading) over
the algebra $\Lambda_{\Z/l}(K^*/K^{*l})$, then both Koszulity of
the algebra $\KM(K)/l$ and a certain version of Bogomolov's freeness
conjecture for the field $K$ follow.

 A positively graded left module $M=M_1\oplus M_2\oplus\dsb$ over
a Koszul algebra~$A$ is called \emph{Koszul}
\cite[Subsection~3.3]{Pbogom} if the groups $H_{ij}(A,M)=
\Tor_{ij}^A(k,M)$ vanish for all $i\ne j-1$ (see~\ref{pbw-theorem}
for comments on the grading conventions).
 This definition, which first appeared in~\cite{BGSoe}, was studied
in detail in~\cite{PP,Pbogom} (see also~\cite{BF}).
 In particular, the conditions on $H_{i,j}(A,M)$ for $i=0$ and~$1$
mean that the $A$\+module $M$ is quadratic, i.~e., generated by $M_1$
with relations in degree~$2$.
 The hypothesis of Koszulity of the ideal $J_K$ can be equivalently
restated as follows.

\begin{mkc}
 For any field $K$ and a prime number~$l$ such that $K$ contains
a primitive $l$\+root of unity if\/ $l$~is odd and $K$ contains
a square root of~$-1$ if\/ $l=2$,
the $\Lambda_{\Z/l}(K^*/K^{*l})$\+module $\KM_+(K)/l =
\KM_1(K)/l\oplus\KM_2(K)/l\oplus\dsb$ is Koszul.
\end{mkc}
 
\subsection{{}}
 Assume that $K$ contains a primitive $l$\+root of unity, and let
$c\in K^*$ be an element not belonging to $K^{*l}$.
 Let $L=K[\sqrt[l]{c}]$.
 Assume further that the Milnor--Bloch--Kato conjecture holds for
the fields $K$ and $L$ and the algebra $\KM(K)/l$ is Koszul.
 Then it was shown in~\cite[proof of Corollary~15]{Pdivis} together
with \cite[Theorem~6.1]{Pbogom} that the algebra $\KM(L)/l$ is Koszul
provided that the annihilator ideal $\Ann(c\bmod l)\sub\KM(K)/l$
of the element $(c\bmod l)\in\KM_1(K)/l$ is a Koszul module over
$\KM(K)/l$.

 It was conjectured that the annihilator ideal in $\KM(K)/l$ of
any nonzero element in $\KM_1(K)/l$ is a Koszul module.
 The silly filtration conjecture for Artin--Tate motives with
$\Z/l$\+coefficients related to the field extension $L/K$ follows
from this module Koszulity conjecture~\cite[Subsection~9.8]{Partin}.
 It can be equivalently restated as follows.

\begin{mkc}
 For any field $K$ containing a root of unity of a prime degree~$l$
and any element $c\in\KM_1(K)/l$, the ideal $(c)=c\.\KM(K)/l$ is
a Koszul module over the algebra $\KM(K)/l$.
\end{mkc}

 Another equivalent formulation of the same conjecture is that
the quotient algebra $(\KM(K)/l)/(c\.\KM(K)/l)$ is a Koszul
module (in the appropriately shifted grading) over
$\KM(K)/l$.

\subsection{{}}
 In this paper, we prove all these Koszulity conjectures for all
local and global fields, i.~e., the algebraic extensions of $\R$, \
$\Q_p$, \ $\F_p((z))$, \ $\Q$, or $\F_p(z)$.
 The only exception is that our proof of Module Koszulity Conjecture~2
in the case of a global field depends on the assumption that
$\{c,c\}=0$ in $\KM_2(K)/l$.
 This assumption always holds when $l$ is odd or $K$ contains
a square root of~$-1$.
 Proving the assertions of these conjectures in the case when $K$
does not contain a primitive $l$\+root of unity is relatively
easy, and we do so, even though there are few general reasons to
believe in the Koszulity conjectures in such a case.
 For local fields, we also prove a certain extension of Module
Koszulity Conjecture~1 to the case of fields not necessarily
containing a square root of~$-1$ when $l=2$.

 This paper is a far-reaching extension of the appendix to~\cite{PV}.
 Our methods involve the construction of infinite quadratic
commutative Gr\"obner bases (infinite commutative PBW-bases) in
the algebras $\KM(K)/l$.
 These constructions make heavy use of the descriptions of Galois
cohomology provided by the local and global class field theory,
some further results from the global class field theory, and
the Chebotarev density theorem.
 These methods also allow to obtain a new proof of the fact that
the graded algebra $\bigoplus_n H^n(G_K,\mu_l^{\ot n})$ is quadratic
for a local or global field~$K$.

\subsection{{}}
 The general formalism of filtrations and PBW-bases indexed by
well-ordered sets is developed in Section~1.
 The relevant background facts from the algebraic number theory
are recalled and discussed in Section~2.
 Koszul properties of the Milnor algebra/Galois cohomology of
a local field are established in Section~3.
 Koszulity of the ideal of Steinberg relations $J_K$ for a global
field $K$ containing a primitive $l$\+root of unity if $l$ is odd
and a square root of $-1$ if $l=2$ is proven in Section~4.
 Koszulity of the algebra $\KM(K)/l$ for any global field $K$
containing a primitive $l$\+root of unity is demonstrated in
Section~5.
 Koszulity of the annihilator ideals in Milnor algebras of global
fields is shown, under certain assumptions, in Section~6.
 All the mentioned Koszul properties are proven for a global field
$K$ not containing a primitive $l$\+root of unity in Section~7.

\subsection*{Acknowledgment}
 The author is grateful to Vladimir Voevodsky for posing the problem.
 This project would never have a chance to succeed without
the invaluable participation of Alexander Vishik in its early stages.
 The work was largely done when I was a graduate student at
Harvard University in the Fall of 1995, and I want to thank Harvard
for its hospitality.
 The author was partially supported by a grant from P.~Deligne
2004 Balzan prize and RFBR grants while finalizing the arguments
and preparing the manuscript.
 I~was also visiting Weizmann Institute of Science during the later
part of this time in~2014.
 Finally, I want to thank the referee for reading the manuscript
carefully and making a number of helpful suggestions.

\Section{Preliminaries on PBW-Bases}

\subsection{Well-ordered sets and filtrations}  \label{well-ordered}
 Let $k$ be a field, $V$ be a $k$\+vector space, and
$I=\{\alpha\}$ be a well-ordered set.
 An \emph{increasing filtration $F$ on $V$ with values in $I$} is
a family of subspaces $F_\alpha V\sub V$, \ $\alpha\in I$, such that
$F_\alpha V\sub F_\beta V$ for $\alpha<\beta$ and
$V=\bigcup_{\alpha\in I}F_\alpha V$.
 The \emph{associated quotient} space $\gr^FV=\bigoplus_\alpha
\gr^F_\alpha V$ is the $I$\+graded vector space with the components
$\gr^F_\alpha V = F_\alpha V\big/\bigcup_{\beta<\alpha}F_\beta V$.

\begin{lem}
 Let $C^\bu$ be a complex of $I$\+filtered vector spaces (with
the differentials preserving the filtrations).
 Then one has $H^0(C^\bu)=0$ provided that $H^0(\gr^F C^\bu)=0$.
 Conversely, if $H^{-1}(\gr^F C^\bu)=0=H^1(\gr^F C^\bu)$ and
$H^0(C^\bu)=0$, then $H^0(\gr^F C^\bu)=0$.
\end{lem}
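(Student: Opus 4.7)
The plan is to work by transfinite induction on the well-ordered set $I$, using for each $\alpha\in I$ the short exact sequence of subcomplexes
$$0 \lrarrow F_{<\alpha}C^\bu \lrarrow F_\alpha C^\bu \lrarrow \gr^F_\alpha C^\bu \lrarrow 0,$$
where I set $F_{<\alpha}C^\bu := \bigcup_{\beta<\alpha}F_\beta C^\bu$; the differentials preserve the filtration, so each $F_\alpha C^\bu$ is indeed a subcomplex. The essential ancillary input is the exactness of filtered colimits of vector spaces, which gives $H^i(F_{<\alpha}C^\bu) = \mathrm{colim}_{\beta<\alpha}H^i(F_\beta C^\bu)$ and $H^i(C^\bu) = \mathrm{colim}_\alpha H^i(F_\alpha C^\bu)$.

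For the first assertion, I would prove by transfinite induction on~$\alpha$ that $H^0(F_\alpha C^\bu)=0$. The long exact cohomology sequence attached to the displayed short exact sequence sandwiches $H^0(F_\alpha C^\bu)$ between $H^0(F_{<\alpha}C^\bu)$ and $H^0(\gr^F_\alpha C^\bu)$. The left-hand group vanishes by the induction hypothesis (after passing to the filtered colimit over $\beta<\alpha$), and the right-hand one by assumption, so $H^0(F_\alpha C^\bu)=0$. Passing once more to the colimit in~$\alpha$ yields $H^0(C^\bu)=0$.

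For the converse, I would begin by running the same transfinite-induction argument, this time using the hypothesis $H^1(\gr^F C^\bu)=0$, to conclude that $H^1(F_\alpha C^\bu)=0$ for every $\alpha$, and hence $H^1(F_{<\alpha}C^\bu)=0$. Combined with $H^{-1}(\gr^F_\alpha C^\bu)=0$, the long exact sequence then collapses to a short exact sequence
$$0 \lrarrow H^0(F_{<\alpha}C^\bu) \lrarrow H^0(F_\alpha C^\bu) \lrarrow H^0(\gr^F_\alpha C^\bu) \lrarrow 0.$$
A second transfinite induction --- exploiting the injectivity of the left-hand map, together with exactness of filtered colimits at limit ordinals --- shows that every transition map $H^0(F_\beta C^\bu) \to H^0(F_\gamma C^\bu)$ for $\beta\le\gamma$ is injective. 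Taking the colimit in~$\gamma$ then produces an injection $H^0(F_\alpha C^\bu) \hookrightarrow H^0(C^\bu) = 0$, whence $H^0(F_\alpha C^\bu)=0$, and the short exact sequence above forces $H^0(\gr^F_\alpha C^\bu)=0$ for every $\alpha$, as desired.

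The principal delicate point is the bookkeeping at limit stages, where $F_\alpha$ need not coincide with $F_{<\alpha}$; the short exact sequence above is nevertheless valid uniformly, and the exactness of filtered colimits in vector spaces makes both the cohomological inductive step and the injectivity of the transition maps go through smoothly. The hypothesis $H^0(C^\bu)=0$ enters only in the final passage to the colimit in the converse direction, which is precisely what distinguishes the two assertions.
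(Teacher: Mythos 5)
Your argument is correct, and it fills in exactly the ``straightforward induction on the well-ordering'' that the paper leaves to the reader: transfinite induction over $I$ via the short exact sequences $0\to F_{<\alpha}C^\bu\to F_\alpha C^\bu\to\gr^F_\alpha C^\bu\to 0$, with exactness of filtered colimits of vector spaces handling the limit stages. Same approach; you've just spelled out the details (in particular the second induction establishing injectivity of the transition maps on $H^0$, which is the only genuinely non-automatic point).
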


\begin{proof}
 Straightforward induction on the well-ordering.
\end{proof}

\subsection{Graded ordered semigroups}  \label{tensor-filtered}
 A \emph{graded ordered semigroup}~\cite[Section~7 of Chapter~4]{PP}
\,$\Gamma$ is a collection of well-ordered sets $\Gamma_n$, \
$n\in\Z_{\ge0}$, endowed with associative multiplication maps
$\Gamma_n\times\Gamma_m\rarrow\Gamma_{n+m}$ strictly compatible
with the orderings, i.~e., $\alpha<\beta$ implies
$\alpha\gamma<\beta\gamma$ and $\gamma\alpha<\gamma\beta$ for all
$\alpha$, $\beta\in\Gamma_n$ and $\gamma\in\Gamma_m$.
 In addition, we assume that the only element of $\Gamma_0$ is the unit
of the semigroup~$\Gamma$.

 Let $U$ be a $\Gamma_n$\+filtered vector space and $V$ be
a $\Gamma_m$\+filtered vector space over~$k$; we will denote both
filtrations by~$F$.
 Define a $\Gamma_{n+m}$\+valued filtration on the tensor product
$U\ot_k V$ by the rule
$$
 F_\gamma(U\ot_kV) = \textstyle\sum_{\alpha\beta\le\gamma}
 F_\alpha U\ot F_\beta V,
$$
where $\alpha\in\Gamma_n$, \ $\beta\in\Gamma_m$, and
$\gamma\in \Gamma_{n+m}$.
 Similarly, if $U=\bigoplus_{\alpha\in\Gamma_n}U_\alpha$ and
$V=\bigoplus_{\beta\in\Gamma_m} V_\beta$ are a $\Gamma_n$\+
and $\Gamma_m$\+graded vector spaces, then the tensor product
$U\ot_k V$ is a $\Gamma_{n+m}$\+graded vector space with
the components
$$
 (U\ot_k V)_\gamma = \textstyle\bigoplus_{\alpha\beta=\gamma}
 U_\alpha\ot_k V_\beta.
$$

\begin{lem}
 There is a natural isomorphism of\/ $\Gamma_{n+m}$\+graded vector
spaces
$$
 \gr^{\Gamma_{n+m}}(U\ot_kV)\simeq
 \gr^{\Gamma_n}U\ot_k\gr^{\Gamma_m}V.
$$
\end{lem}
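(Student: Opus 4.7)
The plan is to construct a natural map from the right-hand side to the left and then verify it is an isomorphism using splittings of the filtrations on $U$ and~$V$.

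For each factorization $\alpha\beta=\gamma$ with $\alpha\in\Gamma_n$ and $\beta\in\Gamma_m$, the inclusion $F_\alpha U\otimes F_\beta V\hookrightarrow F_\gamma(U\otimes_k V)$ descends to a well-defined map $\gr^F_\alpha U\otimes_k\gr^F_\beta V\to\gr^F_\gamma(U\otimes_k V)$: indeed, if $u\in F_{\alpha'}U$ with $\alpha'<\alpha$, then strict compatibility of the product with the ordering forces $\alpha'\beta<\alpha\beta=\gamma$, so $u\otimes v\in F_{\alpha'\beta}(U\otimes V)\subset F_{<\gamma}(U\otimes V)$, and the second variable is symmetric. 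Summing over the (finite, by well-ordering) set of factorizations of~$\gamma$ produces the candidate map $(\gr^{\Gamma_n}U\otimes_k\gr^{\Gamma_m}V)_\gamma\to\gr^F_\gamma(U\otimes_k V)$.

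To see it is an isomorphism, I would build splittings of the two filtrations by transfinite induction on the well-orderings. At stage~$\alpha$, the inductive hypothesis identifies $\bigcup_{\alpha'<\alpha}F_{\alpha'}U$ with $\bigoplus_{\alpha'<\alpha}U_{\alpha'}$; since we work over a field, one then picks a vector space complement $U_\alpha\subset F_\alpha U$, which automatically maps isomorphically onto $\gr^F_\alpha U$ and satisfies $F_\alpha U=\bigoplus_{\alpha'\le\alpha}U_{\alpha'}$. After doing the same for~$V$, a direct computation gives $F_\gamma(U\otimes_k V)=\bigoplus_{\alpha\beta\le\gamma}U_\alpha\otimes_k V_\beta$: the inclusion~$\supset$ is immediate, and for~$\subset$ each generator $F_\alpha U\otimes F_\beta V$ with $\alpha\beta\le\gamma$ expands into the sum of $U_{\alpha'}\otimes V_{\beta'}$ over $\alpha'\le\alpha$ and $\beta'\le\beta$, where monotonicity (a consequence of strict compatibility) forces $\alpha'\beta'\le\gamma$. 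Quotienting by $F_{<\gamma}(U\otimes V)=\bigoplus_{\alpha\beta<\gamma}U_\alpha\otimes V_\beta$ yields exactly $\bigoplus_{\alpha\beta=\gamma}U_\alpha\otimes V_\beta\simeq(\gr U\otimes_k\gr V)_\gamma$, compatibly with the map constructed in the previous paragraph.

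The only subtle point is the use of \emph{strict} compatibility of multiplication with the ordering: without it, summands $F_{\alpha'}U\otimes F_\beta V$ with $\alpha'<\alpha$ and $\alpha'\beta=\alpha\beta$ would survive in $\gr^F_\gamma$, spoiling the clean direct sum decomposition. Everything else is bookkeeping with filtrations, together with the standard fact that filtrations of vector spaces indexed by well-ordered sets admit splittings by transfinite recursion.
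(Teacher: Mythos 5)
Your proposal is correct and follows essentially the same route as the paper: you construct the same natural map (the paper writes it down via representatives, which is equivalent to your inclusion-of-filtrations phrasing) and then verify it is an isomorphism. The paper merely asserts "checking that this is a well-defined isomorphism is easy," and your transfinite construction of compatible splittings is precisely the standard argument it leaves implicit.
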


\begin{proof}
 To define a natural map $\gr^{\Gamma_n}U\ot_k\gr^{\Gamma_m}V
\rarrow\gr^{\Gamma_{n+m}}(U\ot_kV)$, choose for any classes
$\bar u\in\gr^F_\alpha U$ and $\bar v\in\gr^F_\beta V$ their
representatives $u\in F_\alpha U$ and $v\in F_\beta V$,
and assign the class $\overline{u\ot v}\in \gr^F_{\alpha\beta}
(U\ot_k V)$ of the element $u\ot v\in F_{\alpha\beta}
(U\ot_k V)$ to the tensor product $\bar u\ot\bar v\in
\gr^F_\alpha U\ot_k\gr^F_\beta V$.
 Checking that this is a well-defined isomorphism is easy.
\end{proof}

\subsection{Filtered algebras and modules}
\label{filtered-algebras-modules}
 Let $A=\bigoplus_{n=0}^\infty A_n$ be a graded associative
$k$\+algebra with $A_0=k$ and $\Gamma$ be a graded ordered semigroup.
 A \emph{$\Gamma$\+valued filtration} on $A$ is a family of
filtrations $F$ with values in $\Gamma_n$ on the grading
components $A_n$ of the algebra $A$ such that the multiplication
maps $A_n\ot_k A_m\rarrow A_{n+m}$ are compatible with
the $\Gamma_{n+m}$\+valued filtrations.
 By Lemma~\ref{tensor-filtered}, the associated graded quotient
vector space $\gr^FA=\bigoplus_n\gr^FA_n$ has a natural
structure of a graded $k$\+algebra.

 Let $M=\bigoplus_{n=1}^\infty M_n$ be a graded left module
over~$A$.
 A $\Gamma$\+valued filtration on $M$ compatible with the given
$\Gamma$\+valued filtration on $A$ is a family of filtrations
$F$ with values in $\Gamma_n$ on the grading components $M_n$
of the module $M$ such that the multiplication maps $A_n\ot_k M_m
\rarrow M_{n+m}$ are compatible with the $\Gamma_{n+m}$\+valued
filtrations.
 The associated graded quotient vector space $\gr^FM=\bigoplus_n
\gr^FM_n$ has a natural structure of a graded left module
over $\gr^FA$.

 Recall the definitions of the homology of positively graded
associative algebras and modules
from~\cite[Subsections~2.2--3]{Pbogom}:
$H_{i,j}(A)=\Tor_{i,j}^A(k,k)$ and $H_{i,j}(A,M)=\Tor_{i,j}^A(k,M)$.
 Here the second grading~$j$ comes from the grading of $A$ and $M$;
the grading~$i$ is the \emph{homological} grading and
the grading~$j$ is called the \emph{internal} grading.

 When $A$ and $M$ are endowed with $\Gamma$\+valued filtrations,
the bar-complexes computing $H_*(A)$ and $H_*(A,M)$ acquire
the induced filtrations.
 The components of the bar-complexes of the internal grading~$n$
are filtered with values in~$\Gamma_n$.
 By Lemma~\ref{tensor-filtered}, the associated quotient complexes
of these bar-complexes with respect to these filtrations are
the bar-complexes computing $H_*(\gr^FA)$ and $H_*(\gr^FA\;\gr^FM)$.
 By Lemma~\ref{well-ordered}, it follows that $H_{i,j}(A)=0$
provided that $H_{i,j}(\gr^FA)=0$, and $H_{i,j}(A,M)=0$ provided
that $H_{i,j}(\gr^FA\;\gr^FM)=0$.

\subsection{PBW-Theorem}  \label{pbw-theorem}
 A positively graded associative algebra $A=\bigoplus_{n=0}^\infty A_n$
is called \emph{quadratic} if it is isomorphic to the quotient algebra
of a tensor (free associative) algebra $\bigoplus_{n=0}^\infty V^{\ot n}$
by an ideal generated a vector subspace $R\sub V\ot_k V$.
 The \emph{quadratic part} of a positively graded algebra $A$
is the quadratic algebra $\q A$ with the space of generators $V=A_1$
and the subspace of quadratic relations $R=\ker(A_1\ot_k A_1\to A_2)$.
 The natural morphism of graded algebras $\q A\rarrow A$ is
an isomorphism in degree~$1$ and a monomorphism in degree~$2$;
it is an isomorphism of graded algebras if and only if the graded
algebra $A$ is quadratic.

 Similarly, a positively graded left module $M=\bigoplus_{n=1}^\infty
M_n$ over a quadratic algebra $A=\bigoplus_{n=0}^\infty V^{\ot n}/(R)$
is called \emph{quadratic} if it is isomorphic to the quotient module
of a free left $A$\+module $A\ot_k U$ generated in degree~$1$ by
a submodule generated by a vector subspace $P\sub V\ot_k U$.
 The \emph{quadratic part} of a positively graded module $M$ over
a positively graded algebra~$A$ is the quadratic module $\q_AM$
with the space of generators $U=M_1$ and the subspace of relations
$P=\ker(A_1\ot M_1\to M_2)$ over the quadratic algebra~$\q A$.
 The natural morphism of graded $\q A$\+modules $\q_AM\rarrow M$ is
an isomorphism in degree~$1$ and a monomorphism in degree~$2$;
a positively graded module $M$ over a quadratic algebra $A$ is
quadratic if and only if this morphism is an isomorphism in all
the degrees~\cite[Subsection~3.1]{Pbogom}.

 Here we use a convention for graded modules slightly different
from that in~\cite{PP,Pbogom}: quadratic modules $M$ are generated
by $M_1$ with relations in degree~$2$.
 The definition of a Koszul module from~\cite[Subsection~3.3]{Pbogom}
is modified accordingly: a positively graded module $M$
over a Koszul algebra $A$ is called Koszul if $H_{i,j}(A,M)=0$
for all $i\ne j-1$.
 As it was explained in~\ref{filtered-algebras-modules}, given
compatible $\Gamma$\+valued filtrations $F$ on $A$ and $M$,
the algebra $A$ is quadratic or Koszul whenever the algebra $\gr^FA$
is, and the $A$\+module $M$ is quadratic or Koszul whenever
the $\gr^FA$\+module $\gr^FM$ is.
 The following theorem is a more delicate result in this direction.
 It is a generalization of the quadratic case of the Diamond Lemma
for Gr\"obner bases~\cite{Ber,Buch}.

\begin{thm}
 Let $A$ be a $\Gamma$\+filtered graded algebra and $M$ be
a $\Gamma$\+filtered graded $A$\+module.
 Then
\begin{enumerate}
\renewcommand{\theenumi}{\arabic{enumi}}
\item if the algebra $A$ is quadratic, the algebra $\gr^FA$
is generated in degree~$1$, the algebra $\q\,\gr^FA$ is Koszul,
and the natural map $\q\,\gr^F A\rarrow \gr^FA$ is an isomorphism
in the degree $n=3$, then the algebras $\gr^FA$ and $A$ are Koszul;
\item if the algebra $\gr^FA$ is Koszul, the $A$\+module $M$ is
quadratic, the $\gr^FA$\+module $\gr^FM$ is generated in
degree~$1$, the $\gr^FA$\+module $\q_{\gr^F\!\.A}\,\gr^FM$ is Koszul,
and the natural map $\q_{\gr^F\!\.A}\,\gr^FM\rarrow\gr^FM$ is
an isomorphism in the degree $n=3$, then the $\gr^FA$\+module
$\gr^FM$ and the $A$\+module $M$ are Koszul.
\end{enumerate}
\end{thm}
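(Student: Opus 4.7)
The plan is to reduce both parts to filtration-free PBW-type lemmas about the associated graded, and then to prove those by pushing forward the Koszul complex. By the argument at the end of Subsection~\ref{filtered-algebras-modules}, Koszulity of~$A$ follows from Koszulity of $B := \gr^FA$, and Koszulity of~$M$ over~$A$ follows from Koszulity of $N := \gr^FM$ over~$B$. Thus it suffices to show in part~(1) that~$B$ is Koszul, and in part~(2) that~$N$ is Koszul over~$B$. The remaining statement in part~(1) is: \emph{if $B$ is a positively graded algebra generated in degree~$1$, its quadratic cover $C := \q B$ is Koszul, and the surjection $\pi\:C\to B$ is an isomorphism in internal degree~$3$, then $B=C$ and $B$ is Koszul}; and analogously for a module $N$ generated in degree~$1$ whose quadratic cover $P := \q_BN$ is a Koszul $B$-module.

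To prove this PBW lemma, I plan to use the Koszul complex of~$C$. Since $C$ is Koszul, $K_\bu(C) := C\ot_k(C^!)^*$ with its canonical differential is a free resolution of~$k$ as a left $C$-module, whose $i$-th term has $(C^!)^*_i$ sitting in internal degree~$i$. Because $\pi$ is an algebra map, the differential descends to define a complex $K_\bu(B) := B\ot_k(C^!)^*$ of free left $B$-modules with augmentation to~$k$. If $K_\bu(B)$ is exact, then it is a free resolution of~$k$ over~$B$ with terms $K_i(B)$ generated in internal degree~$i$; tensoring over~$B$ with~$k$ then gives $H_{i,j}(B)=(C^!)^*_{i,j}=0$ for $j\ne i$, i.e., $B$ is Koszul with Koszul dual~$C^!$, which forces $B=(C^!)^!=C$ by Koszul duality. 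The module case will follow by the parallel push-forward of the module Koszul complex of~$P$ to a complex $B\ot_k(P^!)^*$ and verification of its exactness.

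Exactness of $K_\bu(B)$ is to be verified by induction on the internal degree~$n$. In the base cases $n\le 3$, one has $K_\bu(B)_n = K_\bu(C)_n$ because $\pi$ is an isomorphism in internal degrees $\le 3$, so exactness is inherited from the Koszul resolution of~$C$. In the inductive step $n\ge 4$, I compare $K_\bu(B)_n$ with $K_\bu(C)_n$ via the surjection~$\pi$ and invoke the matching already established in strictly smaller degrees together with the Koszul identities in $(C^!)^*$; any cycle in $K_\bu(B)_n$ that fails to be a boundary would produce, via a critical-pair/overlap analysis along the Koszul differential, a degree-$3$ anomaly contradicting the hypothesis on~$\pi$.

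The main obstacle is precisely this inductive step, i.e., the quadratic Diamond Lemma content encoded in exactness of $K_\bu(B)$. The delicate bookkeeping concerns the pairings between $C_1$ and $(C^!)_1$ used in the Koszul differential and the compatibility of~$\pi$ with these differentials; carrying it out rigorously amounts to the generalization of Bergman's Diamond Lemma~\cite{Ber} to the $\Gamma$-filtered setting promised in the statement.
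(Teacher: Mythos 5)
Your proposal has a genuine gap: the reduction to a ``filtration-free PBW lemma about the associated graded'' discards the hypothesis that $A$ itself is quadratic, and without that hypothesis the reduced statement is simply false. Concretely, the lemma you try to prove in part~(1) reads: \emph{if $B$ is generated in degree~$1$, $C=\q B$ is Koszul, and $\pi\:C\to B$ is an isomorphism in internal degree~$3$, then $B=C$.} Take $B=k[x]/(x^4)$ and $C=\q B=k[x]$. Then $C$ is Koszul, $\pi$ is an isomorphism in degrees $0,1,2,3$, yet $B\ne C$, and $B$ is not Koszul at all. So no amount of manipulation of the pushed-forward Koszul complex $B\ot_k(C^!)^*$ can make your inductive step go through in this generality; the exactness of $K_\bu(B)$ fails already at internal degree~$4$.

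What saves the actual theorem is precisely the interaction between the original filtered algebra $A$ and $\gr^FA$ that your reduction throws away. The paper's argument does \emph{not} first pass entirely to $\gr^FA$ and forget $A$; it uses Lemma~\ref{well-ordered} in the \emph{converse} direction. For each internal degree $n\ge4$, consider the degree-$n$ part of the bar-complex fragment $A_+^{\ot 4}\to A_+^{\ot3}\to A_+^{\ot2}\to A_+\to k$ with its $\Gamma_n$-valued filtration. Koszulity of $\q\,\gr^FA$ plus the induction hypothesis (quadraticity of $\gr^FA$ in lower degrees) kills the cohomology of the associated graded complex at the second term; generation of $\gr^FA$ in degree~$1$ kills it at the fourth term; and \emph{quadraticity of $A$} kills the cohomology of the \emph{filtered} complex at the middle term. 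The converse part of Lemma~\ref{well-ordered} then forces the associated graded complex to have vanishing middle cohomology, i.e.\ $\gr^FA$ has no new relations in degree~$n$. This is where the hypothesis on $A$ enters, and it is exactly the hypothesis your counterexample above violates (there one would need $A$ quadratic and $\gr^FA\cong B$, which is impossible). Your plan as stated cannot be repaired without reinstating $A$ and the filtration-comparison lemma; replacing the bar complex by the Koszul complex is an inessential cosmetic change, but the filtration-free reformulation of the key lemma is fatal.
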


\begin{proof}
 One only has to prove that the algebra $\gr^FA$ or the module
$\gr^FM$ is quadratic. 
 Proceed by induction on $n\ge4$ showing the map
$\q\,\gr^F A\rarrow \gr^FA$ or $\q_{\gr^F\!\.A}\,\gr^FM\rarrow\gr^FM$
is an isomorphism in degree~$n$.
 For this purpose, consider the component of internal
degree~$n$ of the initial fragment of the bar-complex
$A_+^{\ot 4}\rarrow A_+^{\ot 3}\rarrow A_+^{\ot 2}\rarrow A_+\rarrow k$
or $A_+^{\ot 3}\ot_k M\rarrow A_+^{\ot 2}\ot_k M\rarrow A_+\ot_k M
\rarrow M\rarrow0$ (given that $n\ge4$, this fragment is
acyclic when the algebra $A$ or the module $M$ are Koszul).
 Then apply the second assertion of Lemma~\ref{well-ordered}
in order to conclude that the associated quotient complex with respect
to the $\Gamma_n$\+valued filtration, which is isomorphic to
the similar complex for the algebra $\gr^FA$ or
the $\gr^FA$\+module $\gr^FM$, has zero cohomology at
the middle term (cf.~\cite[Theorem~7.1 from Chapter~4]{PP}).
 Indeed, the condition $H_{3,n}(\q\,\gr^FA)=0$ or
$H_{2,n}(\q\,\gr^FA,\,\q_{\gr^F\!\.A}\,\gr^FM)=0$ together with
the induction assumption guarantee that the associated graded
complex has no cohomology at the second term.
 The assumption of generation in degree~$1$ tells that it has
no cohomology at the fourth term, and the quadraticity assumption
means that the filtered complex has no cohomology at the middle term.
\end{proof}

\subsection{Inverse lexicographical ordering}  \label{inverse-lexi}
 In this paper we will use graded ordered semigroups $\Gamma$ of
the following special form.
 As a graded semigroup, $\Gamma$ is isomorphic to the free
commutative semigroup generated by the set $\Gamma_1$.
 So elements of $\Gamma_n$ are the commutative monomials
$\alpha_1^{n_1}\dsm\alpha_m^{n_m}$, where $\alpha_1<\dsb<\alpha_m$
are elements of $\Gamma_1$ and $n_1+\dsb+n_m=n$.
 The order on $\Gamma_n$ is the \emph{inverse lexicographical
order}: $\alpha_1^{n_1'}\dsm\alpha_m^{n_m'}<\alpha_1^{n_1''}\dsm
\alpha_m^{n_m''}$ if there exists $1\le j\le m$ such that
$n'_i=n''_i$ for $i<j$ and $n'_j>n''_j$.
 For example, $\alpha_1\alpha_4<\alpha_2\alpha_3$ if $\alpha_1<
\alpha_2<\alpha_3<\alpha_4$.

 We will be interested in $\Gamma$\+valued filtrations $F$ on
graded algebras $A$ such that the associated quotient spaces
$\gr^F_\alpha A_1$ are one-dimensional for all $\alpha\in\Gamma_1$.
 Abusing terminology, we will speak of $\Gamma_1$\+indexed bases
$\{x_\alpha\}$ of $A_1$, presuming that $x_\alpha\in F_\alpha A_1$
has a nonzero image, which will be also denoted by $x_\alpha$, in
$\gr^F_\alpha A_1$.
 So the basis $\{x_\alpha\}$ in $A_1$ will be only defined up to
an upper-triangular linear transformation.
 Similarly, we will consider $\Gamma$\+valued filtrations $F$ on
graded modules $M$ such that the associated quotient spaces
$\gr^F_\alpha M_1$ are no more than one-dimensional for all
$\alpha\in\Gamma_1$.

\subsection{Koszulity of algebras}  \label{algebras-koszulity}
 All our graded algebras $A$ will be associative and unital,
generated by $A_1$, and either commutative (when $\chr k=2$)
or supercommutative with respect to the parity associated with
the grading (when $\chr k$ is odd).
 Here a graded algebra $A$ is called supercommutative if the identity
$a^2=0$ for all $a\in A_n$ with odd~$n$ holds in $A$, together with
the identity $ab=(-1)^{nm}ba$ for $a\in A_n$ and $b\in A_m$.
 Notice that any supercommutative algebra over a field~$k$ of
characteristic~$2$ is commutative, but not the other way.

 We will also assume the graded algebra $\gr_FA$ to be
generated in degree~$1$.
 Once a $\Gamma_1$\+indexed filtration of $A_1$ is fixed, this
condition defines a unique extension of this filtration to
a $\Gamma$\+valued filtration of~$A$.
 Given a graded algebra $A$ as above with a $\Gamma$\+valued
filtration $F$ satisfying this condition together with the conditions
of~\ref{inverse-lexi}, the associated quotient algebra $\gr^FA$ is
a commutative or supercommutative \emph{monomial} algebra.
 In other words, it is the quotient algebra of the free commutative
or supercommutative algebra generated by the elements $x_\alpha$
of degree~$1$ by an ideal generated by a set of monomials
in~$x_\alpha$.
 This is easy to see; actually, no other relations can be compatible
with the $\Gamma$\+grading of $\gr^FA$.

 The quadratic part $\q\,\gr^FA$ of a (super)commutative monomial
algebra $\gr^FA$ is always Koszul.
 Indeed, when the set of generators $\{x_\alpha\}$ is finite,
this is the result of the paper~\cite{Fr} (see
also~\cite[Theorem~8.1 of Chapter~4]{PP}),
and the general case follows by passing to the inductive limit
of subalgebras generated by finite subsets of $\{x_\alpha\}$.
 By Theorem~\ref{pbw-theorem}(1), if $A$ is quadratic and $\gr^FA$
has no relations of degree~$3$, then $\gr^FA$ is quadratic.
 By the result of~\ref{filtered-algebras-modules}, if $\gr^FA$ is
quadratic, then $A$ is Koszul.

 When $\gr^FA$ is quadratic, the basis in $A$ formed by those
monomials in $x_{\alpha}$ that survive in $\gr^FA$ is called
a \emph{commutative PBW\+basis} of~$A$.

\begin{rem}
 The commutative PBW-bases of (super)commutative algebras, which
are used in this paper, are particular cases of commutative
Gr\"obner bases~\cite{Buch} whose application to Koszulity questions
is based on the result of R.~Fr\"oberg's paper~\cite{Fr}.
 These are different from noncommutative PBW-bases, which are
particular cases of noncommutative Gr\"obner bases~\cite{Ber} and
whose application to Koszulity was worked out already
by Priddy in~\cite{Pr} (see also~\cite[Sections~1\+-5 of
Chapter~4]{PP}).
 In application to commutative algebras, the commutative PBW-bases
are generally more powerful.
\end{rem}

\subsection{Koszulity of ideals of relations}  \label{relations-koszul}
 Let $B$ be a Koszul algebra and $J\sub B$ be a two-sided ideal
concentrated in the degrees $n\ge 2$.
 Set $A=B/J$.
 Then $J$ is a Koszul left $B$\+module (in the grading appropriately
shifted by~$1$) if and only if $A_+=A_1\oplus A_2\oplus A_3\oplus\dsb$
is a Koszul left $B$\+module.
 In this case, by~\cite[Corollary~6.2(c)]{Pbogom}, the algebra $A$
itself is Koszul.

 Now let $\Lambda$ be the exterior (free supercommutative) algebra
over a field~$k$ generated by a set of elements $x_\alpha$
of degree~$1$ and $A=\Lambda/J$ be the quotient algebra of $\Lambda$
by an ideal of monomials of degree $n\ge 2$ in~$x_\alpha$.
 Let $T$ denote the set of all quadratic monomials $x_\alpha x_\beta$
that are nonzero in $A$; consider $T$ as the set of edges of
an (infinite) graph with the set of vertices~$\{x_\alpha\}$.

\begin{prop}
 Assume that $A_n=0$ for $n\ge3$.  Then 
\begin{enumerate}
\renewcommand{\theenumi}{\arabic{enumi}}
\item the monomial algebra $A$ is Koszul if and only if the graph
$T$ contains no triangles;
\item the $\Lambda$\+module $A_+$ is Koszul if and only if
the graph $T$ contains no cycles of any (finite) length.
\end{enumerate}
\end{prop}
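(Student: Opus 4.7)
The plan is to use the equivalence from~\ref{relations-koszul} between Koszulity of $A_+$ over $\Lambda$ and Koszulity of the ideal $J=\ker(\Lambda\to A)$ in the appropriate shift, together with the general fact from~\ref{algebras-koszulity} that every quadratic (super)commutative monomial algebra is Koszul.

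Part~(1) is immediate: the quadratic cover $\q A=\Lambda/\langle x_\alpha x_\beta:\{\alpha,\beta\}\notin T\rangle$ has a nonzero cubic monomial $x_\alpha x_\beta x_\gamma$ (distinct indices) precisely when $\{\alpha,\beta,\gamma\}$ forms a triangle of $T$. Under the hypothesis $A_n=0$ for $n\ge 3$: if $T$ is triangle-free, then $(\q A)_n=0$ for $n\ge 3$, so $\q A=A$ is Koszul by~\ref{algebras-koszulity}; if $T$ has a triangle, then $\q A\to A$ is not an isomorphism in degree~$3$, so $A$ is not quadratic and hence not Koszul.

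For the $(\Leftarrow)$ direction of Part~(2), first reduce to a tree: when $T=T_1\sqcup T_2$, every cross pair is a non-edge, so $A_+$ splits as the direct sum $(A^{(1)})_+\oplus(A^{(2)})_+$ of $\Lambda$-modules (each component acted on trivially by the other half of $\Lambda$), and K\"unneth reduces Koszulity over $\Lambda=\Lambda^{(1)}\ot\Lambda^{(2)}$ to Koszulity over each $\Lambda^{(i)}$. For a tree $T$, induct on the number of vertices: pick a leaf $\ell$ with unique neighbor $\ell'$ and set $\Lambda'=\Lambda(V\setminus\{x_\ell\})$, $A'=\Lambda'/J'$. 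Since $\ell$ is a leaf and $A_3=0$, the sub-$\Lambda$-module $M\sub A_+$ generated by $x_\ell$ equals the $2$-dimensional $k\.x_\ell\oplus k\.x_\ell x_{\ell'}$, and the quotient $A_+/M$ is identified with $A'_+$ viewed over $\Lambda=\Lambda'\ot\Lambda(x_\ell)$ with $x_\ell$ acting by zero. The annihilator in $\Lambda$ of the generator of $M$ is the ideal $(x_\alpha:\alpha\ne\ell')$, yielding a linear free resolution of $M$; the inductive hypothesis combined with K\"unneth makes $A'_+$ Koszul over $\Lambda$. The long exact sequence of $\Tor^\Lambda(k,-)$ applied to $0\to M\to A_+\to A'_+\to 0$ then forces $\Tor^\Lambda_{i,j}(k,A_+)=0$ for $j\ne i+1$.

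For the $(\Rightarrow)$ direction of Part~(2), the triangle subcase reduces to Part~(1) (the cubic triangle relation becomes a degree-$3$ relation in the $\Lambda$-presentation of $A_+$ not implied by degree-$2$ relations, giving $\Tor^\Lambda_1(k,A_+)_3\ne 0$). Otherwise assume $T$ is triangle-free and contains a chordless cycle on vertices $\alpha_1,\dsc,\alpha_n$ of length $n\ge 4$. The prototype obstruction occurs for $n=4$: the two diagonal non-edges $x_{\alpha_1}x_{\alpha_3}$ and $x_{\alpha_2}x_{\alpha_4}$ of $J$ commute in $\Lambda$, producing a degree-$4$ syzygy $x_{\alpha_2}x_{\alpha_4}\.e_{(\alpha_1,\alpha_3)}-x_{\alpha_1}x_{\alpha_3}\.e_{(\alpha_2,\alpha_4)}$ of $J$ that cannot be written as $\Lambda_1\cdot$(degree-$3$ syzygies), giving $\Tor^\Lambda_1(k,J)_4\ne 0$ and hence $\Tor^\Lambda_2(k,A_+)_4\ne 0$ via the long exact sequence of $0\to J\to\Lambda_+\to A_+\to 0$. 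For $n\ge 5$, the interior chords of $C$ (which are also non-edges of $T$) contribute additional degree-$3$ syzygies to $J$ that render the naive degree-$4$ commutation syzygy fillable, so the obstruction must be sought in a higher homological degree: a cyclic construction along $C$ in the bar complex should produce a nontrivial $\Tor^\Lambda_{i,j}(k,A_+)$ class with $j\ne i+1$, violating Koszulity. The principal obstacle is executing this cyclic construction precisely and verifying its nontriviality; the clean $n=4$ argument fails for longer cycles, and tracking the cycle-theoretic obstruction through the bar complex is the technical heart of the proof.
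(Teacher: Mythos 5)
Parts (1) and (2)$(\Leftarrow)$ are correct and essentially agree with the paper. For (1) you both observe that a triangle is exactly a cubic relation that breaks quadraticity, and that a quadratic (super)commutative monomial algebra is automatically Koszul. For (2)$(\Leftarrow)$ your decomposition into connected components followed by K\"unneth is a slight elaboration of the paper's argument, which proceeds directly by choosing a leaf $x_\alpha$ in the finite forest, splitting off the two-dimensional Koszul submodule $k\.x_\alpha\oplus k\.x_\alpha x_\beta$, and observing that the quotient is Koszul over $\Lambda/(x_\alpha)$ (hence over $\Lambda$) by induction; the K\"unneth/change-of-rings step you spell out is implicit there. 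No issues.

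The genuine gap is in (2)$(\Rightarrow)$ for cycles of length $n\ge5$, which you acknowledge. Your $n=4$ syzygy argument is correct, but it does not generalize, and indeed it \emph{cannot}: the obstruction for a chordless $n$-cycle lies in homological degree $n-2$ rather than degree~$2$, so no single ``commutation'' syzygy in $\Tor_1(k,J)$ will detect it for $n\ge5$. The paper's route is much more direct than a bar-complex cycle construction. After reducing (via the $\Gamma$\+multigrading, as you implicitly do) to the case where $T$ is a finite polygon with $n$ vertices and $n$ edges, one applies the long exact sequence of $\Tor^\Lambda(k,{-})$ to the short exact sequence of $\Lambda$\+modules $0\to A_2\to A_+\to A_1\to 0$. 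Since $A_1$ and $A_2$ are trivial $\Lambda_+$\+modules concentrated in internal degrees $1$ and $2$, one has $H_{i,j}(\Lambda,A_1)=0$ unless $j=i+1$ and $H_{i,j}(\Lambda,A_2)=0$ unless $j=i+2$; the segment of the long exact sequence in internal degree $n$ then reads $H_{n-1,n}(\Lambda,A_1)\to H_{n-2,n}(\Lambda,A_2)\to H_{n-2,n}(\Lambda,A_+)\to 0$, and a dimension count gives $\dim H_{n-2,n}(\Lambda,A_+)=1$. This nonvanishing off the Koszul diagonal $i=j-1$ is the needed obstruction (for $n=4$ it recovers your $\Tor_{2,4}\ne0$, and it also subsumes the triangle case $n=3$, so a separate triangle argument is unnecessary). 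This dimension computation, not a higher syzygy construction, is the missing ingredient in your proof.
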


\begin{proof}
 Part~(1): it is clear that $A$ is quadratic if and only if $T$
does not contain triangles.
 Since $A$ is supercommutative monomial, it is Koszul whenever it
is quadratic (see~\ref{algebras-koszulity} and~\cite{Fr}
or~\cite[Theorem~8.1 of Chapter~4]{PP}).

 Part~(2), ``if'': it suffices to consider the case when the set
$\{x_\alpha\}$ is finite, since then one can pass to the inductive
limit of the similar modules over finitely generated subalgebras
of~$\Lambda$.
 In the finitely generated case, proceed by induction in the number
of vertices.
 Choose a vertex $x_\alpha$ with a single edge $x_\alpha x_\beta$
coming out of it.
 The $k$\+vector subspace spanned by $x_\alpha$ and $x_\alpha
x_\beta$ is a $\Lambda$\+submodule in $A_+$ which is easily seen
to be Koszul.
 Set $(x_\alpha)=\Lambda x_\alpha$.
 The quotient module $M=A_+/\langle x_\alpha,x_\alpha x_\beta \rangle$
is a Koszul module over $\Lambda/(x_\alpha)$ by the induction
assumption.
 Since the ideal $(x_\alpha)\sub\Lambda$ is a Koszul $\Lambda$\+module
(cf.~\ref{annihilator-koszul}), it follows by the way of
the spectral sequence
$E^2_{p,q}=\Tor_p^{\Lambda/(x_\alpha)}(H_q(\Lambda,\Lambda/(x_\alpha)),M)
\Longrightarrow H_{p+q}(\Lambda,M)$ \cite[(6.1)]{Pbogom} that
$A_+/\langle x_\alpha,x_\alpha x_\beta \rangle$ is a Koszul module
over $\Lambda$, too.

 Part~(2), ``only if'': let $\Gamma$ be the free commutative
semigroup generated by the set of indices $\{\alpha\}$; then
$\Lambda$ and $A_+$ are $\Gamma$\+graded.
 Considering subcomplexes of the bar-complex consisting of all
the $\Gamma$\+grading components corresponding to a subsemigroup
of $\Gamma$ spanned by a subset of $\{\alpha\}$, one can see
that the $\Lambda$\+module $A_+$ corresponding to a graph $T$
is Koszul if and only if the same is true for any full subgraph
of $T$ (i.~e., any subgraph consisting of all $T$\+edges between
a given subset of vertices).
 So it suffices to consider the case when $T$ is a finite polygon
with $n$ vertices and $n$ edges.
 In this case, the homology exact sequence related to the short
exact sequence of $\Lambda$\+modules $A_2\rarrow A_+\rarrow A_1$
shows that $\dim H_{n-2,n}(\Lambda,A_+)=1$.
\end{proof}

\subsection{Koszulity of annihilator ideals} \label{annihilator-koszul}
 Let $A$ be a commutative or supercommutative Koszul algebra and
$c\in A_1$ be a nonzero element.
 Then the annihilator ideal $\Ann(c)=\{\.a\in A\mid ac=0\.\}\sub A$
is a Koszul $A$\+module if and only if the ideal $(c)=Ac\sub A$ is
a Koszul $A$\+module, and if and only if the quotient algebra
$A/(c)$ is a Koszul $A$\+module in the grading shifted by~$-1$.

 Assume that there exists a commutative PBW-basis in $A$ corresponding
to a well-ordered set of generators $\{x_\alpha\}\in A_1$ such that
the minimal element of this set is $x_0=c$.
 Then the ideal $(c)$ is a Koszul $A$\+module.
 This is true due to our particular choice of the inverse
lexicographical ordering of monomials in~$x_\alpha$.

 Indeed, let the $\Gamma$\+valued filtration $F$ on $(c)\sub A$ be
induced from the $\Gamma$\+valued filtration $F$ on~$A$.
 Then the ideal $\gr^F(c)\sub\gr^F A$ is generated by the class
$\bar c\in\gr^F A_1$ of the element~$c$ (because $c$ is the minimal
element in the set of generators $\{x_\alpha\}$ and the order of
monomials is inverse lexicographical).

 This is an ideal in a (super)commutative quadratic monomial algebra
generated by a subset of the algebra generators.
 All such ideals are Koszul.
 Indeed, for a finitely generated monomial algebra, this is shown
in~\cite[proof of Theorem~8.1 of Chapter~4]{PP}, and the general
case follows by passing to the inductive limit of ideals in
finitely generated monomial algebras.

\Section{Preliminaries on Number Fields}

 First of all let us recall that for any field $K$ and a prime
number~$l$ the multiplication in $\KM(K)/l$ is supercommutative when
$l$~is odd or $K$ contains a square root of~$-1$, and commutative
when $l=2$.
 More precisely, one has $\{x,x\}=\{-1,x\}$ in $\KM_2(K)$ for any
$x\in \KM_1(K)$ \cite[Section~I.1]{BT}.

\setcounter{subsection}{-1}
\subsection{Equal characteristics}  \label{equal-char}
 For any field $K$ of prime characteristic~$p$ such that $[K:K^p]\le p$
one has $\KM_n(K)/p=0$ for $n\ge 2$ \cite[Proposition~I.5.13]{BT}.
 This includes any finite extensions of $\F_p((z))$ or $\F_p(z)$.
 For any field $K$ of characteristic~$p$, one has
$H^n(G_K,\Z/p)=0$ for $n\ge 2$ \cite[n$^\circ$\,II.2.2]{Ser}.

\subsection{Finite and archimedean fields}  \label{archimedean}
 For a finite field $K=\F_q$, one has $\KM_n(K)=0=
H^n(G_K,\mu_l^{\ot n})$ for any $n\ge2$ and any prime $l$
not dividing~$q$ \cite[Corollary~I.5.12]{BT}.

 For the field of complex numbers $K=\C$, one has $\KM_n(K)/l=0$
for any $n\ge 1$ and any~$l$.
 For the field of real numbers $K=\R$, one has $\KM_n(K)/l=0$ for any
$n\ge 1$ and any odd~$l$, while $\KM(K)/2\simeq\Z/2[\{-1\}]\simeq
H(G_K,\Z/2)$ is the polynomial ring with one generator of degree~$1$
corresponding to the class of the element $-1\in K^*$.

\subsection{Discrete valuation fields}  \label{discrete-valuation}
 Let $K$ be a Henselian discrete valuation field and $k$ be its
residue field.
 Let $l\ne\chr k$ be a prime number.
 Then the $\Z/l$\+algebra $\KM(K)/l$ is generated by
the $\Z/l$\+algebra $\KM(k)/l$ and an element $\{\pi\}\in\KM_1(K)/l$,
corresponding to any uniformizing element $\pi\in K$, subject
to the relations of supercommutativity of $\{\pi\}$ with
$\KM(k)/l$ and $\{\pi,\pi\}=\{-1,\pi\}$ \cite[Proposition~I.4.3]{BT}.

 The absolute Galois group $G_K$ is an extension of the semidirect
product of $G_k$ with the group $\Z_l$, where $G_k$ acts by
the cyclotomic character, by a group of order prime to~$l$.
 This allows to obtain a similar description of the algebra
$\bigoplus_n H^n(G_K,\mu_l^{\ot n})$ in terms of the algebra
$\bigoplus_n H^n(G_k,\mu_l^{\ot n})$.

\subsection{Nonarchimedean local fields}  \label{local-fields}
 Let $K$ be a finite extension of $\Q_p$ or $\F_p((z))$ and
$l\ne\chr K$ be a prime number.
 The computation of $\KM_n(K)/l\simeq H^n(G_K,\mu_l^{\ot n})$
\cite[Corollary on page~268]{Tate} is provided by the local
class field theory.

 When $K$ does not contain a primitive $l$\+root of unity,
one has $\KM_n(K)/l=0$ for $n\ge2$.
 When $K$ contains a primitive $l$\+root of unity, one has
$\KM_n(K)/l=0$ for $n\ge 3$, and $\KM_2(K)/l\simeq\mu_l$.
 In the latter case, the multiplication map
$$
 \KM_1(K)/l\ot_{\Z/l}\KM_1(K)/l\lrarrow \KM_2(K)/l
$$
is a nondegenerate pairing.
 This pairing provides the comparison between the isomorphism
$G_K^\ab/l\simeq K^*/K^{*l}$ of the local class field theory
and the isomorphism $G_K^\ab/l\simeq \Hom_\Z(K^*,\mu_l)$ of
the Kummer theory; hence the nondegeneracy.

 The $\Z/l$\+vector space $\KM_1(K)/l$ is finite-dimensional.
 Except when $K$ is a finite extension of $\Q_l$, its dimension
is equal to~$2$ when $K$ contains a primitive $l$\+root of unity,
and~$1$ otherwise.
 When $K$ is a finite extension of $\Q_l$, the dimension is~$\ge3$
when $K$ contains a primitive $l$\+root of unity, and $\ge2$ otherwise.

 When $l$ is odd and $K$ contains a primitive $l$\+root of unity,
or $l=2$ and $K$ contains a square root of~$-1$, \ $\KM_1(K)/l$ is
a symplectic vector space with respect to the multiplication pairing.
 In other words, the multiplication pairing is skew-symmetric, i.~e.,
$\{x,x\}=0$ for all $x\in\KM_1(K)/l$.
 In particular, the dimension of $\KM_1(K)/l$ is even.

 When $l=2$ and $K$ does not contain a square root of~$-1$, it
follows from nondegeneracy and the relation $\{x,x\}=\{-1,x\}$
that $\dim\KM_1(K)/l$ is even when $\{-1,-1\}=0$ and odd when
$\{-1,-1\}\ne0$.
 In both cases, the isomorphism class of the (nonskew-symmetric)
pairing form is determined by $\dim\KM_1(K)/l$.

 Except when $K$ is a finite extension of~$\Q_l$, the product $\{x,y\}$
of the classes of two elements $x$, $y\in K^*$ with the logarithmic
valuations $v(x)=0=v(y)$ is always zero in $\KM_2(K)/l$, and 
the product of the classes of two elements $x\in K^*\setminus K^{*l}$
and $y\in K^*$ with $v(x)=0$ and $v(y)$ not divisible by~$l$ is 
always nonzero in $\KM_2(K)/l$.

\subsection{Global fields with root of unity}  \label{with-root}
 For any field $K$ and a prime number $l\ne\chr K$, the group
$H^2(G_K,\mu_l)$ is isomorphic to the subgroup ${}_l\!\.\Br K$ of
the Brauer group $\Br K$ consisting of all elements annihilated by~$l$.

 For any finite extension $K$ of $\Q$ or $\F_q(z)$ and a prime number
$l\ne\chr K$ the natural map $\KM_2(K)/l\rarrow H^2(K,\mu_l^{\ot 2})$
is an isomorphism \cite[Theorem~5.1]{Tate}.
 Assuming that $K$ contains a primitive $l$\+root of unity and
combining this isomorphism with the computation of $\Br K$ provided
by the global class field theory, we see that there is a natural
short exact sequence
$$
 0\rarrow \KM_2(K)/l\lrarrow\textstyle\bigoplus_v\KM_2(K_v)/l\.\simeq\.
 \bigoplus_{v'}\mu_l\lrarrow\mu_l\rarrow0.
$$
 Here the direct sum in the second term is over all valuations~$v$
of $K$, and the direct sum in the third term is taken over all
the valuations~$v'$ not including the complex valuations if $l=2$, or
not including the archimedean valuations if $l$ is odd.
 The rightmost map is the simple summation over~$v'$.
 The assertion that the composition of the two maps vanishes is one
of the formulations of the reciprocity law.

 Given two elements $x$ and $y$ in $K^*$, or $K_v^*$, or
$K_v^*/K_v^{*l}$, etc., we will denote by $\{x,y\}_v$ their product
in $\KM_2(K_v)/l$, and identify the latter group with $\mu_l$
(assuming that we are not in the case when $\KM_2(K_v)/l=0$).
 So the reciprocity law takes the form $\sum_v\{x,y\}_v=0$ for
any $x$, $y\in K^*$.

 For any $n\ge3$ and any global field $K$, the natural map
$$
 \KM_n(K)\lrarrow\textstyle\bigoplus_v \KM_n(K_v)/2
$$
is an isomorphism~\cite[Theorem~II.2.1(3)]{BT}.
 Here the summation is over all the real valuations~$v$ of~$K$.
 One can obtain a compatible description of $H^n(G_K,\mu_l^{\ot n})$
from the global class field theory, by computing $H^n(G_K,\mu_l)$
in terms of $H^n(G_K,K^*)$ and the latter in terms of
$H^n(G_{K_v},K_v^*)$ and the cohomology of the classes of id\`eles.

\subsection{Exceptional set of valuations}  \label{exceptional-set}
 Let $K$ be a finite extension of $\Q$ or $\F_q(z)$ containing
a primitive $l$\+root of unity.
 Let $S$ be a finite set of valuations of $K$ containing all
the archimedean valuations and all the valuations lying over~$l$,
and generating the class group of the field~$K$.
 Denote by $W_S$ the $\Z/l$\+vector space
$$
 W_S=\textstyle\bigoplus_{v\in S} K_v^*/K_v^{*l},
$$
and let $K_S\sub K^*$ denote the subgroup of all elements $b$ having
the logarithmic valuation $p(b)=0$ (in other words, $b$ is integral
and integrally invertible in $K_p$) for all valuations $p\notin S$.

\begin{lem1}
 The natural map $K_S/l\rarrow W_S$ is injective.
\end{lem1}

\begin{proof}
 This is \cite[Lemma~VII.9.2 and Remark VII.9.3]{CF}.
\end{proof}

 Define a $\mu_l$\+valued bilinear form on $W_S$ as the orthogonal
sum of the bilinear forms on $K_v^*/K_v^{*l}$, that is
$(x,y)_S=\sum_{v\in S}\{x_v,y_v\}_v$.

\begin{lem2}
 The subspace $K_S/l\sub W_S$ coincides with its own orthogonal
complement with respect to the pairing form $({-},{-})_S$.
\end{lem2}

\begin{proof}
 The pairing form on $W_S$ is symmetric or skew-symmetric and
nondegenerate, since the pairings on $K_v^*/K_v^{*l}$ are.
 By the reciprocity law, one has $(K_S/l,K_S/l)=0$.
 It remains to check that $\dim  W_S = 2\dim  K_S/l$.
 We will show that $\dim K_S/l=\#S$ and $\dim W_S=
2\#S$, where $\#S$ is the number of elements in~$S$. 

 Indeed, by Dirichlet's unit theorem the group $K_S$ is the direct
sum of a free abelian group of rank $\#S-1$ and the finite cyclic
group of roots of unity in $K$, whose order is divisible by~$l$
by assumption.
 This computes $\dim K_S/l$.

 To compute $\dim W_S$, consider two cases separately.
 When $K$ is a finite extension of $\F_q(z)$, one has
$\dim K_v^*/K_v^{*l}=2$ for all $v\in S$.
 When $K$ is a finite extension of $\Q$, one has
\begin{itemize}
\item $\dim K_v^*/K_v^{*l}=2$ for all nonarchimedean $v\in S$
not lying over~$l$;
\item $\dim K_v^*/K_v^{*l}=2+[K_v:\Q_l]$ for any nonarchimedean
$v\in S$ lying over~$l$;
\item $\dim K_v^*/K_v^{*l}=0$ when $K_v=\C$; and
\item $\dim K_v^*/K_v^{*l}=1$ when $K_v=\R$, since such valuations~$v$
can only exist when $l=2$, as $\R$ does not contain any other
$l$\+roots of unity. 
\end{itemize}
 Summing this up, one easily obtains $\dim W_S=2\#S$.
\end{proof}

 The following lemma can be thought of as a kind of ``approximation
theorem for id\`eles modulo~$l$''.

\begin{lem3}
 Let $w$ be an element of $W_S$ and $D$ be a divisor of $K$ supported
outside of $S$, i.~e., a formal linear combination of valuations of
$K$, not belonging to $S$, with integral coefficients.
 The pairing with $w$ defines a $\mu_l$\+valued linear function on
$K_S/l$, and another such function is provided by the linear
combination of Frobenius elements in $\Gal(K[\sqrt[l]{K_S}]/K)$
corresponding to the divisor~$D$.
 Suppose that these two linear functions coincide.
 Then there exists an element $a\in K^*$ whose image in $W_S$ is equal
to~$w$ and whose divisor outside $S$ is equal to~$D$.
 Furthermore, the element~$a$ is unique modulo $K_S^l$.
\end{lem3}

\begin{proof}
 Since $S$ generates the class group of $K$, one can find an element
$b\in K^*$ whose divisor outside of $S$ is equal to~$D$.
 Let us denote the image of $b$ in $W_S$ also by~$b$.
 By the reciprocity law, the element $w/b\in W_S$ is orthogonal to
$K_S/l$, hence by Lemma~2 it belongs to $K_S/l$.
 Lift it to an element $c\in K_S$ and set $a=bc$.
 The uniqueness follows immediately from Lemma~1.
\end{proof}

\subsection{Symplectic case}  \label{symplectic-case}
 The following lemma is useful in the case of a global field $K$
which contains a primitive $l$\+root of unity when $l$~is odd,
or contains a square root of~$-1$ when $l=2$.
 Recall that the pairing $({-},{-})_S$ is skew-symmetric in
this case.

\begin{lem}
 Suppose that a (finite-dimensional) symplectic vector space $W$ over
a field~$k$ is decomposed into an orthogonal direct sum of symplectic
vector spaces $W_v$.
 Let $L$ be a Lagrangian subspace in~$W$.
 Then there exist Lagrangian subspaces $M_v$ in $W_v$ such that
the direct sum of $M_v$ is complementary to $L$ in~$W$.
\end{lem}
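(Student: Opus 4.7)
The plan is to induct on the number~$r$ of summands in the orthogonal decomposition $W=\bigoplus_v W_v$. The base case $r=1$ is the standard fact that every Lagrangian subspace of a symplectic vector space admits a Lagrangian complement.

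For the inductive step with $r\ge2$, I single out one summand~$W_r$, set $W'=\bigoplus_{v<r}W_v$, and write $\pi'\:W\rarrow W'$ and $\pi_r\:W\rarrow W_r$ for the orthogonal projections.  Denote the isotropic subspace $L_r=L\cap W_r\sub W_r$.  The crucial first move is to choose a Lagrangian $M_r\sub W_r$ satisfying the transversality condition $M_r\cap L_r=0$; such an $M_r$ exists because one may extend $L_r$ to a Lagrangian $N\sub W_r$ and then take any Lagrangian complement of~$N$.  Next, introduce the auxiliary subspace $L_M=\{\.\ell\in L\mid\pi_r(\ell)\in M_r\.\}$ and set $L'=\pi'(L_M)\sub W'$.

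The key step is to verify that $L'$ is Lagrangian in~$W'$.  Isotropy is immediate: expanding $\ell_i=\pi'(\ell_i)+\pi_r(\ell_i)$ in the identity $\langle\ell_1,\ell_2\rangle_W=0$ and using $W'\perp W_r$ together with the isotropy of $M_r$ yields $\langle\pi'(\ell_1),\pi'(\ell_2)\rangle=-\langle\pi_r(\ell_1),\pi_r(\ell_2)\rangle=0$.  For the dimension count, one uses the identity $\pi_r(L)^\perp=L_r$ in~$W_r$, which combined with $M_r\cap L_r=0$ forces $M_r+\pi_r(L)=W_r$ and therefore $\dim(M_r\cap\pi_r(L))=\tfrac12\dim W_r-\dim L_r$; together with the vanishing $\ker(\pi'|_{L_M})=L_r\cap M_r=0$, this gives $\dim L'=\tfrac12\dim W'$.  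Applying the inductive hypothesis to $W'$ and the Lagrangian~$L'$ then produces Lagrangians $M_v\sub W_v$ for $v<r$ with $\bigoplus_{v<r}M_v$ complementary to $L'$ in~$W'$.

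To conclude, I check that $L\cap\bigoplus_v M_v=0$ (which, by dimensions, is equivalent to complementarity).  If $\ell=\sum m_v\in L\cap\bigoplus_v M_v$, then $\pi_r(\ell)=m_r\in M_r$ forces $\ell\in L_M$, so $\pi'(\ell)\in L'\cap\bigoplus_{v<r}M_v=0$ by the induction; this puts $\ell\in W_r$, whence $\ell\in L_r\cap M_r=0$.  The main obstacle, and the heart of the argument, is identifying the right transversality condition $M_r\cap L_r=0$ and the right ``replacement Lagrangian'' $L'\sub W'$ to feed into the induction; once these are in place, the isotropy check, the dimension count, and the final verification all fall out easily.
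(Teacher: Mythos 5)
Your proof is correct but takes a genuinely different route from the paper. The paper first reduces to the case where every $W_v$ is two-dimensional (by splitting each $W_v$ into an orthogonal sum of symplectic planes), then picks the lines $M_v$ one at a time in an arbitrary order; the inductive step there is a counting argument: because $L$ is Lagrangian, in the plane $W_v$ there is \emph{at most one} line $N_v$ such that $N_v\oplus\bigoplus_{v'<v}M_{v'}$ meets $L$, so one can always pick a different line $M_v$. Your argument instead inducts on the number of summands at their original dimensions, singling out one block $W_r$, choosing $M_r$ transverse to the isotropic piece $L_r=L\cap W_r$, and then descending to a correctly-sized ``residual Lagrangian'' $L'=\pi'(L_M)\sub W'$ to feed the induction; your isotropy check, the dimension count via $\pi_r(L)^\perp=L_r$ and $\ker(\pi'|_{L_M})=L_r\cap M_r=0$, and the final transversality verification are all correct. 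The paper's reduction to planes buys a very short inductive step at the cost of having to justify the reduction; your direct induction avoids the reduction and the one-bad-line counting, at the cost of more bookkeeping in constructing and verifying $L'$. Both arguments are sound, and yours is arguably better adapted to a reader who prefers to keep the blocks $W_v$ intact rather than subdivide them.
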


\begin{proof}
 It suffices to consider the case when all $W_v$ are two-dimensional
(otherwise decompose every one of them into an orthogonal direct sum
of two-dimensional symplectic vector spaces).
 Order the set of indices $\{v\}$ and proceed by induction, choosing
subspaces $M_{v'}\sub W_{v'}$ such that $\bigoplus_{v'\le v}M_{v'}$
does not intersect~$L$.
 Assume that $M_{v'}$ have been chosen so that $\bigoplus_{v'<v}M_{v'}$
does not intersect~$L$.
 Then, since $L$ is Lagrangian, there exists at most one line
$N_v\sub W_v$ for which $N_v\oplus\bigoplus_{v'<v}M_{v'}$
intersects~$L$.
 So a line $M_v\sub W_v$ such that $\bigoplus_{v'\le v}M_{v'}$
does not intersect~$L$ can always be chosen.
\end{proof}

\subsection{Global fields without root of unity}  \label{without-root}
 Let $K$ be a finite extension of $\Q$ or $\F_q(z)$ that does not
contain a primitive $l$\+root of unity.
 Notice that $l$~is necessarily odd in this case, so $\KM(K)/l$
is supercommutative and $\KM_n(K)/l=0$ for $n\ge3$.

 Set $L=K[\sqrt[l]{1}]$.
 The degree $[L:K]$ of this field extension divides~$l-1$ and
is prime to~$l$.
 Passing to $\Gal(L/K)$\+invariants in the description of
$\KM_2(L)/l\simeq H^2(G_L,\mu_l^{\ot 2})$ given in~\ref{with-root},
one concludes that the $\Z/l$\+vector space $\KM_2(K)/l\simeq
H^2(G_K,\mu_l^{\ot 2})$ is isomorphic to the direct sum of
the groups $\mu_{l,v}$ of $l$\+roots of unity in $K_v$ over all
the nonarchimedean valuations $v$ of $K$ for which $K_v$
contains a primitive $l$\+root of unity.

 So the product $\{x,y\}$ in $\KM_2(K)/l$ of the classes of two
elements $x$, $y\in K^*$ can be considered as the collection of
the local products $\{x,y\}_v\in\KM_2(K_v)/l\simeq\mu_{l,v}$
indexed by all such valuations~$v$.
 There are no relations between the local products: any finite
collection of elements in $\mu_{l,v}$ corresponds to an element
of $\KM_2(K)/l$.

 Let $S'$ and $S$ be finite sets of valuations of $K$ and $L$,
respectively, such that $S$ is the set of all valuations of $L$
lying over the valuations of $K$ beloning to~$S'$, the set $S'$
contains all the archimedean valuations of $K$ and all
the valuations, lying over~$l$, the set $S'$ generates the class
group of $K$, and the set $S$ generates the class group of~$L$.
 Set $W_S=\bigoplus_{v\in S} L_v^*/L_v^{*l}$, and let
$W'_{S'}$ denote the direct sum of $K_v^*/K_v^{*l}$ over all
the valuations $v\in S'$ such that $K_v$ contains a primitive
$l$\+root of unity.
 There is a natural injective map $W'_{S'}\rarrow W_S$.
 Set $M=L[\sqrt[l]{L_S}]$.

\begin{lem}
 For any element $w'\in W'_{S'}$ there exist infinitely many
valuations~$p$ of $K$ outside $S'$ for which $K_p$ does not
contain a primitive $l$\+root of unity and there exists
an element $a_p\in K^*$ whose image in $W'_{S'}$ is equal
to~$w'$ and whose divisor outside $S'$ is equal to~$p$.
\end{lem}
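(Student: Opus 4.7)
The plan is to apply Lemma~3 of Subsection~\ref{exceptional-set} to the cyclotomic extension $L = K(\mu_l)$ and to descend the resulting element of $L^*$ to $K^*$, using the fact that $d = [L:K]$ divides $l-1$ and is in particular coprime to~$l$. Set up the Galois-theoretic data as follows: let $G = \Gal(L/K)$, \ $H = \Gal(M/L) = \Hom(L_S/l,\mu_l)$, and $\tilde G = \Gal(M/K)$, fitting into the group extension $1 \to H \to \tilde G \to G \to 1$. Since every $v \in S'$ with $K_v \supset \mu_l$ splits completely in $L/K$, the given element $w' \in W'_{S'}$ has a natural $G$-invariant diagonal image $\iota(w') \in W_S$, and the pairing $b \mapsto (\iota(w'),b)_S$ defines a homomorphism $f_{w'} \colon L_S/l \to \mu_l$, i.e.\ an element of $H$; Galois equivariance of the local symbols together with $G$-invariance of $\iota(w')$ shows $f_{w'} \in H^G$.

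Next apply Chebotarev density. Fix any non-identity $\sigma \in G$ of order~$f$ and a lift $\tilde\sigma \in \tilde G$. For an element $\phi = h\tilde\sigma \in \tilde G$ with $h \in H$, a direct power computation gives $\phi^f = N_{\langle\sigma\rangle}(h) + \tilde\sigma^f$ in~$H$; if $p$ is a prime of $K$ unramified in $M/K$ with $\mathrm{Frob}_p$ conjugate to $\phi$, then $p \notin S'$, \ $K_p \not\supset \mu_l$, and summing the Frobenius elements at the primes $w \mid p$ of $L$ yields $\sum_{w \mid p} \mathrm{Frob}_w = N_G(h) + N_{G/\langle\sigma\rangle}(\tilde\sigma^f) \in H^G$. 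Since $|G|$ is coprime to~$l$ and $H$ is a $\Z/l$-module, the norm $N_G \colon H \to H^G$ is surjective, so I can choose $h$ to make this sum equal to $f_{w'}$. Chebotarev density then supplies infinitely many such $p$. For each such $p$, Lemma~3 of Subsection~\ref{exceptional-set} applied to $L$ with $w = \iota(w')$ and $D = \sum_{w \mid p} w$ produces $\alpha \in L^*$ whose image in $W_S$ is $\iota(w')$ and whose divisor outside $S$ is $\sum_{w \mid p} w$.

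Finally, descend $\alpha$ to $K^*$. Writing $S'' = S \cup \{w : w \mid p\}$ and $S'_K = S' \cup \{p\}$, a standard cohomological calculation --- Hilbert~90 applied to the localization sequence $0 \to L^*_{S''} \to L^* \to \bigoplus_{P\notin S''}\Z \to 0$ (whose rightmost term has $H^1(G,{-})=0$ by Shapiro) together with $H^i(G,\mu_l) = 0$ for $i \ge 1$ (from $\gcd(|G|,l)=1$) --- yields the isomorphism $(L^*_{S''}/L^{*l}_{S''})^G \simeq K^*_{S'_K}/K^{*l}_{S'_K}$. The uniqueness clause of Lemma~3 combined with the $G$-invariance of $\iota(w')$ and of $D$ forces $\sigma(\alpha)/\alpha \in L^{*l}_{S''}$ for all $\sigma \in G$, so the class of $\alpha$ descends to a class represented by some $\beta \in K^*_{S'_K}$; since $L_w = K_v$ at any $v \in S'$ with $K_v \supset \mu_l$, the image of $\beta$ in $W'_{S'}$ is automatically $w'$. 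Writing $\alpha = \beta\gamma^l$ and comparing divisors at the primes above the unramified~$p$ forces $v_p(\beta) \equiv 1 \pmod l$; multiplying $\beta$ by an $l$-th power of a suitable element of $K^*_{S'_K}$ (which exists because $S'$ generates $\mathrm{Cl}(K)$) adjusts $v_p$ to exactly~$1$ without altering the image in $W'_{S'}$, yielding the desired $a_p \in K^*$.

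The main obstacle is the Chebotarev step: one needs a conjugacy class in $\tilde G$ whose projection to $G$ is non-trivial (to ensure $K_p \not\supset \mu_l$) and whose associated Frobenius sum in $H$ equals the prescribed $f_{w'} \in H^G$. This hinges on the surjectivity of $N_G \colon H \twoheadrightarrow H^G$, which is precisely where the coprimality of $[L:K]$ with~$l$ is essential; the Galois descent of $\alpha$ to an element of $K^*$ is then routine bookkeeping with divisors and $l$-th powers.
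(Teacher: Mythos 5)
Your proposal is correct and follows the same overall strategy as the paper's own proof: reduce to the cyclotomic extension $L=K(\mu_l)$, produce the element over $L$ via Lemma~3 of Subsection~\ref{exceptional-set}, use Chebotarev density in $\Gal(M/K)$ to guarantee a suitable prime~$p$ inert at some level in $L/K$, and descend from $L^*$ to $K^*$ using the fact that $[L:K]$ is prime to~$l$ together with Hilbert~90. The two places where your argument diverges from the paper are matters of bookkeeping rather than substance. For the Chebotarev target, the paper exploits the fact that $\Gal(M/K)$ is the semidirect product of $G=\Gal(L/K)$ and $H=\Gal(M/L)$ (orders being coprime), embeds $G$ into $\Gal(M/K)$, and writes down the single element $q=(h/[L:K])f$ with $f\in G$ nontrivial; a one-line transfer computation ($\tr(f)=1$, $\tr(g)=h$) then shows $q$ has the required transfer. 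You instead take an arbitrary lift $\tilde\sigma$ of a nontrivial $\sigma\in G$, parametrize candidates as $\phi=h\tilde\sigma$, compute the Frobenius sum as $N_G(h)+N_{G/\langle\sigma\rangle}(\tilde\sigma^f)$, and appeal to surjectivity of $N_G\colon H\twoheadrightarrow H^G$ to solve for~$h$; this works, but note that you also need to observe (which you leave implicit) that $N_{G/\langle\sigma\rangle}(\tilde\sigma^f)\in H^G$, which holds because $\tilde\sigma^f$ is $\langle\sigma\rangle$-invariant. The paper's choice of a splitting makes the correction term vanish outright and is a bit cleaner. For the descent step, the paper applies $H^1(G,L^{*l})=0$ (from Hilbert~90 and $H^2(G,\mu_l)=0$) directly on $L^*$, obtains a $G$-invariant $a_1\in K^*$, then observes that its divisor is congruent to~$D'$ mod~$l$ and uses that $S'$ generates $\Cl(K)$ to fix it up by an $l$-th power; you work with the $S$-unit groups $L^*_{S''}$ and $K^*_{S'_K}$ and a localization sequence, which gives the same conclusion with somewhat heavier setup. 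Both descents are valid; the paper's is shorter.
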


\begin{proof}
 Let $D'$ be a divisor of the field $K$ outside of $S'$; one can
naturally assign to it a divisor $D$ of the field $L$ outside of~$S$.
 For any element $w'\in W'_{S'}$, consider its image $w\in W_S$,
the pairing with~$w$ in $W_S$ as a linear function $L_S/l\rarrow
\mu_l$, and this linear function as an element of $\Gal(M/L)$.
 If the linear combination of Frobenius elements in $\Gal(M/L)$
corresponding to $D$ is equal to this element, then
by Lemma~\ref{exceptional-set}.3 there exists a unique, up to
$L_S^l$, element $a_2\in L^*$ whose image in $W_S$ is equal to~$w$
and whose divisor outside of $S$ is equal to~$D$.
 Due to the uniqueness, this element defines a $\Gal(L/K)$\+invariant
class in $L^*/L^{*l}$.

 From the short exact sequence $\mu_l\rarrow L^*\rarrow L^{*l}$,
Hilbert's Theorem~90, and the order of the group $\Gal(L/K)$ being
prime to~$l$, one can see that $H^1(\Gal(L/K)\;L^{*l})=0$.
 Hence there exists a $\Gal(L/K)$\+invariant element $a_1\in L^*$
which differs from $a_2$ by an element of $L^{*l}$.
 The element $a_1$ belongs to $K^*$, and its image in $W'_{S'}$
is equal to~$w'$, since its image in $W_S$ is equal to~$w$.
 The divisor of $a_1\in K^*$ is congruent to $D'$ modulo~$l$,
since the divisor of $a_1\in L^*$ is congruent to $D$ modulo~$l$ and
the ramification indices in the extension $L/K$ are prime to~$l$.
 Since $S'$ generates the class group of $K$, one can multiply $a_1$
with an element of $K^{*l}$ so that the resulting element $a\in K$
has the divisor $D'$ outside of $S'$; clearly, the image of~$a$
in $W'_{S'}$ is equal to~$w'$.

 The linear combination of Frobenius elements in $\Gal(M/L)$ 
corresponding to $D$ is the image of the linear combination of
Frobenius elements in $\Gal(M/K)^\ab$ corresponding to $D'$
under the transfer map $\tr\:\Gal(M/K)^\ab\rarrow\Gal(M/L)$.
 Since the element $h\in\Gal(M/L)$ corresponding to~$w$ is invariant
under $\Gal(L/K)$ and the order of the latter group is prime to~$l$,
the element $h$ is equal to the transfer of the element
$g\in\Gal(M/K)^\ab$ obtained as the image of $h/[L:K]$ under
the map $\Gal(M/L)\rarrow\Gal(M/K)^\ab$.

 Being an extension of the abelian groups $\Gal(L/K)$ and $\Gal(M/L)$
of coprime orders, the group $\Gal(M/K)$ is their semidirect
product.
 So the group $\Gal(L/K)$ can be embedded into $\Gal(M/K)$.
 Choose a nontrivial element $f\in\Gal(L/K)$ and consider
the product $q=(h/[L:K])f\in\Gal(M/K)$.
 Its image in $\Gal(M/K)^\ab$ is the product of~$g$ with
the image of~$f$, which we will denote also by~$f$.
 For the reasons of orders of the elements, or commutation of
transfer with the corestriction in group homology, it is clear
that $\tr(f)=1$ and $\tr(gf)=\tr(g)=h$ in $\Gal(L/K)$.

 By Chebotarev's density theorem, there exist infinitely many
valuations~$p$ of the field $K$ outside of $S'$ whose Frobenius
elements in $\Gal(M/K)$ are conjugate to~$q$.
 In this case, the Frobenius element of~$p$ in $\Gal(L/K)$ is
nontrivial, so $K_p$ does not contain a primitive $l$\+root of unity.
 Furthermore, let $D$ be the divisor of $L$ outside $S$ equal to
the image of~$p$ (which is considered as a divisor of~$K$).
 Then the linear combination of Frobenius elements in $\Gal(M/L)$
corresponding to $D$ is equal to the pairing with~$w$ in $W_S$
as a linear function $L_S/l\rarrow\mu_l$.
 Hence the element $a_p=a\in K^*$ constructed above has the desired
properties.
\end{proof}

\Section{Koszulity for Local Fields}  \label{local-koszulity}

 For any field $K$ and a prime number $l$ denote by $\Lambda(K,l)$
the graded algebra over $\Z/l$ generated by $\Lambda_1(K,l)=K^*/K^{*l}$
with the relations $\{x,-x\}=0$ for $x\in K^*$.
 The algebra $\Lambda(K,l)$ is always Koszul.

 Indeed, when $l$ is odd or $K$ contains a square root of $-1$,
this algebra is simply the exterior algebra generated by
$K^*/K^{*l}$.
 Otherwise, choose any well-ordered basis $\{x_\alpha\}$ of
the $\Z/l$\+vector space $K^*/K^{*l}$ such that the first basis
vector is $x_0=\{-1\}$, and consider the related $\Gamma$\+valued
filtration $F$ of $\Lambda(K,l)$
(see~\ref{inverse-lexi}--\ref{algebras-koszulity}).
 Then the algebra $\gr^F\Lambda(K,l)$ is isomorphic to the tensor
product of the symmetric algebra with one generator $\{-1\}$ and
the exterior algebra generated by $\Lambda_1(K,l)/
\langle\{-1\}\rangle$.

 There is a natural morphism of graded $\Z/l$\+algebras $\Lambda(K,l)
\rarrow\KM(K)/l$.
 Let $J_K$ denote its kernel; it is the ideal generated by
the Steinberg symbols.

\begin{thm1} \
\begin{enumerate}
\renewcommand{\theenumi}{\arabic{enumi}}
\item
 Let $K$ be an algebraic extension of\/ $\R$, \ $\Q_p$, or\/
$\F_p((z))$, and\/ $l$~be a prime number.
 Then the ideal $J_K\sub\Lambda(K,l)$ is a Koszul
$\Lambda(K,l)$\+module (in the grading shifted by~$1$).
 In particular, the algebra $\KM(K)/l$ is Koszul.
\item
 Let $K$ be a Henselian discrete valuation field with the residue
field\/~$k$ and\/ $l\ne\chr k$ be a prime number.
 Then the algebra $\KM(K)/l$ is Koszul whenever the algebra
$\KM(k)/l$ is.
 The ideal $J_K$ is a Koszul $\Lambda(K,l)$\+module whenever
the ideal $J_k$ is a Koszul $\Lambda(k,l)$\+module.
\end{enumerate}
\end{thm1}

\begin{proof}
 Part~(1): the cases $K\supset\R$ or $l=\chr K$ are trivial in
view of~\ref{archimedean} and~\ref{equal-char}, respectively.
 Indeed, in the former case one has $J_K=0$, and in the latter case
one can use the fact that the left $A$\+module $A_{\ge2}=
A_2\oplus A_3\oplus\dsb$ is Koszul for any Koszul algebra~$A$.
 Passing to the inductive limit, one reduces the problem to
the case when $K$ is a finite extension of $\Q_p$ or $\F_p((z))$
(and $l\ne\chr K$).

 The case when $K$ does not contain a primitive $l$\+root of unity
is similar to the above; see~\ref{local-fields}.
 When $l$ is odd and $K$ contains a primitive $l$\+root of unity,
or $l=2$ and $K$ contains a square roof of~$-1$, one can choose any
ordered basis of $K^*/K^{*l}$; the corresponding supercommutative
monomial algebra $\gr^F\.\KM(K)/l$ obviously satisfies the condition
of Proposition~\ref{relations-koszul}(2), since the graph $T$
contains only one edge.

 When $l=2$, the class of~$-1$ is nontrivial in $\KM_1(K)/2$, but
$\{-1,-1\}=0$ in $\KM_2(K)/2$, choose any ordered basis $\{x_\alpha\}$
of $K^*/K^{*2}$ with the minimal element $x_0=\{-1\}$ and the second
minimal element $x_1$ such that $x_0x_1\ne0$.
 Consider the related $\Gamma$\+valued filtrations $F$ of
$\Lambda(K,2)$ and $\KM(K)/2$.
 The corresponding algebra $\gr^F\Lambda(K,2)$ is the tensor product
of the symmetric algebra generated by $x_0$ and the exterior algebra
in other variables.
 The algebra $\gr^F\.\KM(K)/2$ is the commutative monomial algebra
with the only nonzero monomial $x_0x_1$ in the degrees~$\ge2$.
 One easily checks that both the $\gr^F\Lambda(K,2)$\+module spanned
by $x_0$ and $x_0x_1$ and the quotient module of $\gr^F\.\KM_+(K)/2$
by this submodule are Koszul.

 When $l=2$ and $\{-1,-1\}\ne0$ in $\KM_2(K)/2$, choose any ordered
basis $\{x_\alpha\}$ of $K^*/K^{*2}$ in which the minimal three
elements $x_0$, $x_1$, $x_2$ are such that $x_1=\{-1\}$ and
$x_0^2=x_0x_1=0\ne x_0x_2$ in $\KM_2(K)/2$.
 Consider the related $\Gamma$\+valued filtrations $F$ of
$\Lambda(K,2)$ and $\KM(K)/2$.
 The corresponding algebra $\gr^F\.\KM(K)/2$ is the commutative
monomial algebra with the only nonzero monomial $x_0x_2$ in
the degrees~$\ge2$.
 The algebra $\gr^F\Lambda(K,2)$ is the quadratic commutative
monomial algebra with the defining relations $x_0x_1=0$ and
$x_\alpha^2=0$ for $\alpha\ge2$.
 As above, one easily checks that the graded
$\gr^F\Lambda(K,2)$-module $\gr^F\.\KM_+(K)/2$ is Koszul.

 Part~(2): the argument is based on the description of $\KM(K)/l$
given in~\ref{discrete-valuation}.
 This time, the increasing filtrations on graded algebras that
we need to use are indexed by the conventional integers.
 Set $F_0\KM(K)/l=\KM(k)/l$ and $F_1\KM(K)/l=\KM(K)/l$.
 Then the graded algebra $\gr^F\.\KM(K)/l$ is the supertensor product
of $\KM(k)/l$ and the exterior algebra with one generator $\{\pi\}$
in degree~$1$, hence it is Koszul provided that $\KM(k)/l$
is \cite[Corollary~1.2 of Chapter~3]{PP}.

 To prove the second assertion, define also a compatible increasing
filtration $F$ on $\Lambda(K,l)$ by the rule $F_0\Lambda(K,l)=
\Lambda(k,l)$ and $F_1\Lambda(K,l)=\Lambda(K,l)$.
 Then the graded algebra $\gr^F\Lambda(K,l)$ is the supertensor
product of $\Lambda(k,l)$ and the exterior algebra with one
generator $\{\pi\}$, so it remains to apply the module part of
the same Corollary from~\cite{PP} (or more precisely, its
straightforward generalization to the infinite-dimensional setting).
\end{proof}

\begin{thm2} \
\begin{enumerate}
\renewcommand{\theenumi}{\arabic{enumi}}
\item
 Let $K$ be an algebraic extension of\/ $\R$, \ $\Q_p$, or\/
$\F_p((z))$, and\/ $l$~be a prime number.
 Let $c$ be an element of\/ $\KM_1(K)/l$.
 Then the ideal $(c)\sub\KM(K)/l$ is a Koszul module over\/
$\KM(K)/l$.
\item
 Let $K$ be a Henselian discrete valuation field with the residue
field\/~$k$ and\/ $l\ne\chr k$ be a prime number.
 Assume that the graded algebra $\KM(k)/l$ is Koszul and for any
element $c\in\KM_1(k)/l$ the ideal $(c)\sub\KM(k)/l$ is a Koszul
module over\/ $\KM(k)/l$.
 Then the graded algebra $\KM(K)/l$ has the same properties.
\end{enumerate}
\end{thm2}

\begin{proof}
 Part~(1): the case of an infinite algebraic extension is deduced
from that of a finite extension by passing to an inductive limit.
 When $K\supset\R$, \ $l=\chr K$, or $K$ does not contain a primitive
$l$\+root of unity, the assertion is trivial.
 The assertion is also trivial when $c=0$.
 So let us assume that $K$ is a finite extension of $\Q_p$ or
$\F_p((z))$ containing a primitive $l$\+root of unity and $c\ne0$.

 When $\{c,c\}=0$ in $\KM_2(K)/l$, choose any ordered basis of
$K^*/K^{*l}$ starting with $x_0=c$ (for simplicity, one can also
pick the next basis vector $x_1$ so that $x_0x_1\ne0$) and use
the result of~\ref{annihilator-koszul}.
 This covers the cases when $l$~is odd or $K$ contains a square root
of~$-1$.
 When $\{c,c\}\ne0$ in $\KM_2(K)/2$ but $c\ne\{-1\}$ in $\KM_1(K)/2$,
choose an ordered basis of $K^*/K^{*2}$ starting with $x_0$, $x_1$
such that $x_0^2=0$, \ $x_1=c$, and $x_0x_1\ne0$ in $\KM_2(K)/2$.
 Then for the corresponding $\Gamma$\+valued filtration $F$ on
$\KM(K)/2$ the ideal $\gr^F(c)\sub\gr^F\.\KM(K)/2$ is generated
by the element $c$, so one can argue as in~\ref{annihilator-koszul}.

 It remains to consider the case when $c=\{-1\}$ in $\KM_1(K)/2$
and $\{-1,-1\}\ne0$ in $\KM_2(K)/2$.
 Choose an ordered basis of $K^*/K^{*2}$ starting from $x_0$,
$x_1$, $x_2$ such that $x_0^2=0=x_1^2$, \ $x_0x_1\ne0$, and
$x_2=\{-1\}$ (or $x_0^2=0$, \ $x_1=\{-1\}$, and $x_0x_2\ne0$).
 Consider the related $\Gamma$\+valued filtration on $\KM(K)/2$.
 Let us define a $\Gamma$\+valued filtration on the ideal $(\{-1\})$
that is compatible with the action of $\KM(K)/2$ on $(\{-1\})$
but is not induced by the embedding $(\{-1\})\sub\KM(K)/2$.

 Namely, choose any $\Gamma$\+valued filtration $F$ on the degree~$1$
component of the ideal $(\{-1\})$ and extend it to the degree~$2$
component in such a way that the $\gr^F\.\KM(K)/2$\+module
$\gr^F(c)$ be generated by its degree~$1$ component
(cf.~\ref{algebras-koszulity}).
 Then the $\gr^F\.\KM(K)/2$\+module $\gr^F(c)$ is isomorphic to
the quotient module of the quadratic commutative monomial algebra
$\gr^F\.\KM(K)/2$ by the ideal generated by all the $x_\alpha$
except $x_2$ (resp., $x_1$), so it remains to use the result
of~\cite[proof of Theorem~8.1 from Chapter~4]{PP}.
 It is essential here that $x_2^2=0$ (resp., $x_1^2=0$)
in $\gr^F\.\KM(K)/2$.

 Part~(2): recall that $\KM(k)/l$ can be naturally considered as
a subalgebra of $\KM(K)/l$.
 If $c\in\KM_1(k)/l$, consider the filtration $F$ on $\KM(K)/l$
defined in the proof of part~(2) of Theorem~1 and the induced
filtration on the ideal~$c\.\KM(K)/l$.
 Then $\gr^F\.\KM(K)/l$ is the supertensor product of $\KM(k)/l$
with the exterior algebra with one generator in degree~$1$, and
the $\gr^F\.\KM(K)/l$\+module $\gr^F c\.\KM(K)/l$ is the supertensor
product of $\gr^F c\.\KM(k)/l$ with the same exterior algebra.
 So it remains to apply~\cite[Corollary~1.2 of Chapter~3]{PP}.

 If $c\in\KM_1(K)/l$ but $c\notin\KM_1(k)/l$, one can assume that
$c=\{\pi\}$ in the notation of~\ref{discrete-valuation}.
 In this case the ideal $(c)\sub\KM(K)/l$ is Koszul whenever
the algebra $\KM(k)/l$ is Koszul.
 It suffices to consider the same filtration $F$ on $\KM(K)/l$
and the induced filtration on the ideal~$(\{\pi\})$.
 The $\gr^F\.\KM(K)/l$\+module $\gr^F(\{\pi\})$ is the supertensor
product of the $\KM(k)/l$\+module $\KM(k)/l$ and the trivial
one-dimensional module over the exterior algebra with one generator.
\end{proof}

\Section{Module Koszulity in Symplectic Case}
\label{symplectic-koszulity}

 Let $l$ be a prime number, and $K$ be an algebraic extension of $\Q$
or $\F_q(z)$ containing a primitive $l$\+root of unity if $l$~is odd,
or containing a square root of~$-1$ if $l=2$.
 Let $\Lambda(K,l)$ denote the exterior algebra generated by
the $\Z/l$\+vector space $K^*/K^{*l}$, and $J_K$ denote the kernel
of the morphism of graded algebras $\Lambda(K,l)\rarrow\KM(K)/l$.

\begin{thm}
 The ideal $J_K$ is a Koszul module over $\Lambda(K,l)$
(in the grading shifted by~$1$).
 In other words, the $\Lambda(K,l)$\+module $\KM_+(K)/l$ is Koszul.
\end{thm}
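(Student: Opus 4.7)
The plan is to construct a well-ordered basis $\{x_\alpha\}$ of $K^*/K^{*l}$ so that the induced inverse-lexicographical $\Gamma$-valued filtration $F$ on $\Lambda(K,l)$ and $\KM(K)/l$ (as in~\ref{inverse-lexi}--\ref{algebras-koszulity}) makes $\gr^F\KM(K)/l$ into a supercommutative monomial algebra whose nonzero quadratic monomials form a graph~$T$ without cycles. In the symplectic hypothesis on~$K$ there are no real completions, so $\KM_n(K)/l=0$ for $n\ge3$ by the isomorphism recalled at the end of~\ref{with-root}, which places us in the setting of Proposition~\ref{relations-koszul}(2). That proposition, combined with the module version of Theorem~\ref{pbw-theorem}(2), will then deliver Koszulity of $\KM_+(K)/l$ as a $\Lambda(K,l)$-module; the degree-$3$ isomorphism required by the PBW theorem is automatic, since $\gr^F\KM_3(K)/l=0$ and the quadratic module generated by a forest vanishes in degree~$3$.

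To construct the basis I enumerate the valuations of $K$ as $v_0<v_1<v_2<\dsb$ in such a way that $S_n=\{v_0,\dsc,v_n\}$ eventually contains all archimedean valuations and all valuations above~$l$, and eventually generates the class group. At each stage~$n$, the finite-dimensional symplectic $\Z/l$-space $W_{S_n}=\bigoplus_{v\in S_n}K_v^*/K_v^{*l}$ contains the Lagrangian subspace $K_{S_n}/l$ (Lemma~2 of~\ref{exceptional-set}), and by Lemma~\ref{symplectic-case} admits a complementary Lagrangian of the form $\bigoplus_{v\in S_n}M_v$ with each $M_v\sub K_v^*/K_v^{*l}$ Lagrangian for the local symplectic pairing. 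Using Lemma~3 of~\ref{exceptional-set} I lift chosen bases of the local $M_v$ to global elements of~$K^*$ whose image in $W_{S_n}$ is concentrated in a single local summand $v$; the auxiliary divisor outside~$S_n$ is produced by a Chebotarev-type argument (in the spirit of Lemma~\ref{without-root}) arranged so that reductions of previously constructed basis elements at the auxiliary valuations are $l$-th powers. I then order the basis so that for each~$v$ two elements $x_v,y_v$ spanning a symplectic pair in $K_v^*/K_v^{*l}$ appear consecutively, with the pairs for distinct valuations disjoint.

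By construction, for $v\ne v'$ every cross-pair product $\{x_v,x_{v'}\}_{v''},\{x_v,y_{v'}\}_{v''},\{y_v,y_{v'}\}_{v''}$ vanishes at each $v''\in S_n$, because the local components come from distinct Lagrangian summands of $W_{S_n}$; and it vanishes at the auxiliary valuations by the choice of Chebotarev data. Therefore each cross-product becomes zero in $\KM_2(K)/l$ modulo monomials strictly earlier in the ordering, whereas the intra-pair products $\{x_v,y_v\}$ survive in $\gr^F\KM_2(K)/l$. The only nonzero quadratic monomials in $\gr^F\KM(K)/l$ are the $x_vy_v$, giving a perfect matching --- in particular a forest. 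Proposition~\ref{relations-koszul}(2) yields Koszulity of $\gr^F\KM_+(K)/l$ over $\gr^F\Lambda(K,l)$, and Theorem~\ref{pbw-theorem}(2) transfers the conclusion back to $\KM_+(K)/l$ over $\Lambda(K,l)$.

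The main obstacle is the consistency of the Chebotarev-type construction across the growing chain $S_0\sub S_1\sub\dsb$: introducing new basis elements at stage $n+1$ may force corrections of the lifts made earlier, and one has to keep those corrections inside $K_{S_n}^l$ so as not to disturb the leading filtered terms. Showing that the required auxiliary valuations with the prescribed Frobenius behaviour and residue classes can always be found --- and can be found coherently --- is the principal number-theoretic content of the argument. Once this is in place, the remaining hypotheses of Theorem~\ref{pbw-theorem}(2) (quadraticity of $\Lambda(K,l)$ and of $\KM_+(K)/l$, generation of $\gr^F\KM_+(K)/l$ in degree~$1$) are immediate, and the theorem follows.
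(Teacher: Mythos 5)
Your proposal shares the paper's starting point (the decomposition of $W_S$ into Lagrangians from Lemma~\ref{exceptional-set}.2 and Lemma~\ref{symplectic-case}, passage to $\gr^F$, and reduction to the acyclicity criterion of Proposition~\ref{relations-koszul}(2)), but the middle of the argument diverges from the paper's and has genuine gaps.

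First, you set up a growing chain $S_0\sub S_1\sub\dsb$ exhausting all valuations and try to construct the basis coherently across the chain. The paper fixes a single finite exceptional set $S$ once and for all and takes the basis to consist of a basis $\{b_i\}$ of $K_S/l$ compatible with the decomposition $K_S/l\simeq\bigoplus_v M_v^*$, together with, for each valuation $p\notin S$, an element $a_p\in K^*$ whose divisor outside $S$ is~$p$ and whose image in $W_S$ lies in $\bigoplus_v M_v$; Lemma~\ref{exceptional-set}.3 supplies these in one stroke, with no inductive compatibility to maintain. The coherence problem you yourself flag as ``the principal number-theoretic content of the argument'' is an artifact of your exhaustion, and it is not addressed; it simply does not arise in the paper.

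Second, the structure you claim for the graph $T$ is not correct. You want $T$ to be a perfect matching $\{x_vy_v\}$. But there cannot be two distinguished basis elements of $K^*/K^{*l}$ per valuation: the paper's basis (and any basis) has roughly one basis element per valuation outside~$S$, plus finitely many more. Moreover, your own construction is internally inconsistent on this point: if both $x_v$ and $y_v$ are lifted from the Lagrangian $M_v\sub K_v^*/K_v^{*l}$, then $\{x_v,y_v\}_v=0$, so they cannot span a ``symplectic pair'' at~$v$. In fact, the forest $T$ that the paper obtains is not a matching at all: the explicit PBW-basis at the end of Section~\ref{symplectic-koszulity} produces a graph in which the single vertex $b_0$ is adjacent to the infinitely many $a_{p_i}$, $a_q$, and possibly some $a_r$.

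Third, and most importantly, you miss the mechanism that actually makes the argument work. The paper's key observation is that one can assign to each basis element a single valuation (to $b_i$ the valuation $v(i)$, to $a_p$ the valuation $p$) so that, by the Lagrangian property of $\bigoplus M_v$ and the local trivialities of $a_p$ outside $S\cup\{p\}$, the product in $\KM_2(K)/l$ of any two basis elements is supported at exactly the pair of assigned valuations. Combined with the fact that the subspace of $\KM_2(K)/l$ supported in an $n$-element set has dimension at most $n-1$ (reciprocity), any $n$-cycle in $T$ would produce $n$ products of surviving monomials living in an $(n-1)$-dimensional space, so the maximal one reduces, a contradiction. This shows $T$ is acyclic \emph{for every well-ordering of the basis}; there is no need to engineer a particular ordering (that is done only afterwards, to exhibit an explicit PBW-basis). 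Your attempt to control the cross-products valuation-by-valuation does not engage this dimension/reciprocity argument, and does not yield a complete proof as written.
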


\begin{proof}
 Passing to the inductive limit of finite extensions of $\Q$ or
$\F_q(z)$ containing the needed root of unity, one reduces
the problem to the case when $K$ is such a finite extension.
 So let $K$ be a finite extension of $\Q$ or $\F_q(z)$ containing
a primitive $l$\+root of unity if $l$ is odd, or containing
a square root of~$-1$ if $l=2$.

 Apply Lemma~\ref{symplectic-case} to the case of the symplectic
vector space $W_S$ decomposed into the orthogonal direct sum of
symplectic subspaces $K_v^*/K_v^{*l}$, \ $v\in S$, and the Lagrangian
subspace $K_S/l\sub W_S$ (see~\ref{exceptional-set}).
 Let $M_v\sub K_v^*/K_v^{*l}$ be the Lagrangian subspaces so obtained.
 The restriction of the form $({-},{-})_S$ defines a nondegenerate
pairing between $K_S/l$ and $\bigoplus_{v\in S}M_v$, which allows to
identify $K_S/l$ with the direct sum of the dual spaces
$\bigoplus_{v\in S}M_v^*$.
 So we have constructed a direct sum decomposition of $K_S/l$ indexed
by $v\in S$; let $\{\.b_i: i=0,\dsc,\#S-1\.\}$ be a basis in $K_S/l$
whose elements belong to the direct summands of this decomposition.
 Let $b_i^*$ denote the dual basis in $\bigoplus M_v$; introduce
the notation $b_i^*\in M_{v(i)}$.
 Notice that the image of $b_i$ in $K_v^*/K_v^{*l}$ belongs to $M_v$
for all $v\ne v(i)$.

 For any divisor $D$ of $K$ outside $S$ there exists a unique element
$a_D\in K^*/K_S^l$ whose divisor outside $S$ is equal to $D$ and whose
image in $W_S$ belongs to $\bigoplus M_v$.
 In particular, for any valuation $p$ of $K$ outside $S$ there exists
a unique element $a_p\in K^*/K_S^l$ with this property, whose divisor
outside $S$ is equal to~$p$.
 The pairing with the image of $a_p$ in $\bigoplus M_v$, as a linear
function $K_S/l\rarrow\mu_l$, coincides with the Frobenius element
of~$p$ in $\Gal(K[\sqrt[l]{K_S}]/K)$
(see Lemma~\ref{exceptional-set}.3).

 Choose a well-ordered basis of $K^*/K^{*l}$ consisting of
the elements $b_i$ and $a_p$ (in any order).
 Consider the related $\Gamma$\+valued filtration $F$ on $\KM(K)/l$
and pass to the associated quotient monomial algebra
$\gr^F\.\KM(K)/l$.
 The graph $T$ of nonzero quadratic monomials in the latter algebra
contains no cycles, so the assertion of Theorem follows from
Proposition~\ref{relations-koszul}(2).
 Indeed, it suffices to notice that one can assign a valuation to
every basis element in this basis so that the product of any two
basis elements can only have nonzero components (see~\ref{with-root})
at the two valuations corresponding to the two basis vectors being
multiplied.
 Thus for any elements $x_1$, $x_2$,~\ldots, $x_n$ in this basis
the products $x_1x_2$, $x_2x_3$,~\dots, $x_{n-1}x_n$, $x_nx_1$
cannot be linearly independent in $\KM_2(K)/l$ (one also has to take
into account the reciprocity law).

 To obtain a more explicit PBW-basis, choose for each $i=1$,~\dots,
$\#S-1$ a valuation $p_i$ of $K$ outside $S$ such that the Frobenius
element of $p_i$ in $\Gal(K[\sqrt[l]{K_S}]/K)$ is equal to
the pairing with $b_0^*+b_i^*$, while its Frobenius element in
$\Gal(K[\sqrt[l]{a_{p_1}}, \dsc,\sqrt[l]{a_{p_{i-1}}}]/K)$ is trivial.
 Denote by~$q$ those valuations of $K$ outside of $S$ and $\{p_i\}$
whose Frobenius elements in $\Gal(K[\sqrt[l]{K_S}]/K)$ are equal to
the pairing with $b_0^*$, while the Frobenius elements in
$\Gal(K[\sqrt[l]{a_{p_1}},\dsc,\sqrt[l]{a_{p_{\#S-1}}}]/K)$ are trivial.
 Denote by~$r$ the remaining valuations.
 Notice that for each valuation~$r$ there exists a valuation~$q$
whose Frobenius element in $\Gal(K[\sqrt[l]{a_r}])$ is nontrivial.

 Choose the following well-ordered basis of $K^*/K^{*l}$
$$
 b_0,\ a_{p_1},\dsc,a_{p_{\#S-1}},\ a_q,\ b_1,\dsc,b_{\#S-1},\ a_r,
$$
where the ordering between $a_q$ and between $a_r$ is arbitrary.
 Then the set of surviving quadratic monomials $T$ will consist of
all the monomials $b_0 a_{p_i}$ and $b_0 a_q$, \ some of the monomials
$b_0b_i$ or $a_{p_i}b_i$ (exactly one monomial of one of these forms
for every nonarchimedean valuation $v(i)\ne v(0)$ in $S$), and some
of the monomials $b_0a_r$, \ $a_{p_i}a_r$, or $a_qa_r$ (exactly one
monomial of one of these forms for every valuation~$r$).
\end{proof}

\begin{rem}
 One may wish to extend the above result to the global fields not
necessarily containing a square root of~$-1$ when $l=2$ in the way
suggested by Theorem~\ref{local-koszulity}.1.
 There is the following obstacle, however.
 If one tries to argue as in the proof of
Theorem~\ref{local-koszulity}.1, one has to find a well-ordered basis
of $K^*/K^{*2}$ that defines a PBW-basis for \emph{both} algebras
$\Lambda(K,2)$ and $\KM(K)/2$.
 But constructing a PBW-basis for $\Lambda(K,2)$ requires putting
the element~$-1$ near the bottom of the $\Gamma_1$\+valued filtration
on $K^*/K^{*2}$, while constructing a PBW-basis for $\KM(K)/2$
requires putting all elements of $K^*/K^{*2}$ that are negative at
some real valuations near the top of that filtration
(cf.~the construction in Section~\ref{general-koszulity}).
\end{rem}

\Section{Algebra Koszulity in General Case}  \label{general-koszulity}

 Let $l$ be a prime number, and $K$ be an algebraic extension of
$\Q$ or $\F_q(z)$ containing a primitive $l$\+root of unity.

\begin{thm}
 The graded algebra $\KM(K)/l$ is Koszul.
\end{thm}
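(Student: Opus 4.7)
The plan is to first reduce, via the inductive limit argument at the start of the proof of the Theorem of Section~\ref{symplectic-koszulity}, to the case where $K$ is a finite extension of $\Q$ or $\F_q(z)$ containing a primitive $l$\+root of unity.  If $l$ is odd, or if $l=2$ and $K$ contains a square root of~$-1$, the Theorem of Section~\ref{symplectic-koszulity} already gives Koszulity of the $\Lambda(K,l)$\+module $J_K$, and by~\ref{relations-koszul} this implies Koszulity of the quotient algebra $\KM(K)/l$.  The real content of the theorem therefore lies in the remaining case $l=2$ and $\sqrt{-1}\notin K$, in which $\KM(K)/2$ is commutative but not supercommutative, and the approach of Section~\ref{symplectic-koszulity} breaks down precisely at the point noted in its concluding Remark.

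For this case, the plan is to construct a commutative PBW-basis of $\KM(K)/2$ directly, giving up the requirement that the same basis also serve $\Lambda(K,2)$.  By Theorem~\ref{pbw-theorem}(1), together with the automatic Koszulity of quadratic (super)commutative monomial algebras recalled in~\ref{algebras-koszulity}, it suffices to produce a $\Gamma$\+valued filtration $F$ on $\KM(K)/2$ for which $\gr^F\.\KM(K)/2$ is a quadratic commutative monomial algebra; the quadraticity of $\KM(K)/2$ itself is a known feature of global fields, and in any event drops out of this construction.

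The construction follows the blueprint of Section~\ref{symplectic-koszulity}.  Fix an exceptional set $S$ of valuations as in~\ref{exceptional-set}, and use Lemma~\ref{exceptional-set}.3 together with Chebotarev's density theorem to manufacture ``prime'' basis elements $a_p$ with prescribed images in $W_S$ and with divisor~$p$ outside~$S$.  Since the symmetric form on $W_S$ is no longer symplectic, the Lagrangian decomposition of Lemma~\ref{symplectic-case} is unavailable and must be replaced.  The remedy is to split off as a separate orthogonal summand the ``non-symplectic directions''---the lines spanned by $\{-1\}_v$ at the real valuations $v\in S$ and at those non-archimedean $v\in S$ with $\sqrt{-1}\notin K_v$ (cf.~\ref{local-fields}); apply Lemma~\ref{symplectic-case} to the orthogonal symplectic complement, and handle the separated lines by the device from the proof of Theorem~\ref{local-koszulity}.1 in the case $\{-1,-1\}\ne 0$.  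Following the hint of the Remark concluding Section~\ref{symplectic-koszulity}, the resulting well-ordered basis of $K^*/K^{*2}$ is organized into three layers: totally positive $S$\+units and their symplectic Lagrangian duals near the bottom, the Chebotarev primes $a_p$ in the middle, and all classes that are negative at some real valuation, including $\{-1\}$ itself, at the very top.

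It then remains to verify that $\gr^F\.\KM(K)/2$ is quadratic, which by Theorem~\ref{pbw-theorem}(1) reduces to an isomorphism check in degree~$3$.  The crucial input here is the isomorphism $\KM_n(K)/2\simeq\bigoplus_{v\text{ real}}\KM_n(K_v)/2$ for $n\ge 3$ from~\ref{with-root}: every nonzero cubic monomial in $\gr^F\.\KM(K)/2$ is supported entirely on the ``top layer'' of the basis, and the ordering within that layer has been engineered so that every such cubic factors as a product of surviving quadratic monomials, while triple products mixing layers are controlled exactly as in Section~\ref{symplectic-koszulity}.  The main obstacle---and the hardest part of the argument---is this degree~$3$ bookkeeping: one must show that the non-symplectic top layer couples cleanly with the Chebotarev-generated bulk, so that no spurious cubic relation appears.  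Once that is in place, Theorem~\ref{pbw-theorem}(1) and~\ref{algebras-koszulity} deliver Koszulity of $\gr^F\.\KM(K)/2$, and hence of $\KM(K)/2$.
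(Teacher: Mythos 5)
Your reduction, top-level strategy (construct a $\Gamma$-valued filtration on $\KM(K)/2$ whose associated graded is a quadratic commutative monomial algebra, then invoke~\ref{algebras-koszulity}), the idea of sacrificing compatibility with $\Lambda(K,2)$, and the identification of the real places and the isomorphism $\KM_n(K)/2\simeq\bigoplus_{v\text{ real}}\KM_n(K_v)/2$ as the controlling input in high degree, all line up with the paper. But there are two genuine problems with what you have written.

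First, the proposal to split off the lines spanned by $\{-1\}_v$ as orthogonal summands and then apply Lemma~\ref{symplectic-case} to the complement does not go through as stated. In characteristic~$2$, the orthogonal complement of $\{-1\}_v$ in $K_v^*/K_v^{*2}$ is precisely the hyperplane $\{x : \{x,x\}_v=0\}$, on which the form is alternating; but this hyperplane contains $\{-1\}_v$ exactly when $\{-1,-1\}_v=0$, in which case $\langle\{-1\}_v\rangle$ is isotropic and cannot be an orthogonal direct summand. So the decomposition you need exists only at places with $\{-1,-1\}_v\ne0$, and for the remaining nonarchimedean places in $S$ (a case that is common) your reduction to a symplectic $W$ breaks down. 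The paper's Section~\ref{general-koszulity} sidesteps this entirely: it does not invoke Lemma~\ref{symplectic-case} at all, but instead chooses, for each nonarchimedean $u\in S$, a single local class $w_u$ orthogonal to $\{-1\}_u$ whose pairing is nonzero on $K_S^+/K_S^l$, builds Chebotarev primes $a_{q_u}$ realizing $w_u$ globally, and works with the explicit basis $a_p,\ a_{q_u},\ k_j,\ a_{r'},\ a_{r''},\ a_v$ without any Lagrangian decomposition of $W_S$.

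Second, and more seriously, you explicitly identify the degree-$3$ bookkeeping as ``the main obstacle---and the hardest part of the argument'' and then do not carry it out: the paragraph asserts that ``one must show that the non-symplectic top layer couples cleanly with the Chebotarev-generated bulk'' and proceeds on the assumption that it can be shown. This is precisely where the content of the theorem lies. In the paper, the corresponding step is an explicit support analysis of every type of quadratic product $\{x_\alpha,x_\beta\}$ with respect to the chosen well-ordering, culminating in the two concrete facts that the graph $T$ contains no triangles and that the only monomials in $T$ involving the top-layer elements $a_v$ are $a_v^2$; combined with the linear independence of the $a_v^n$ in $\KM_n(K)/2$ this is exactly what forces $\gr^F\KM(K)/2$ to be quadratic. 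Without some version of this case-by-case verification the argument is not complete, and, given the problem in the preceding paragraph, the verification cannot be done ``exactly as in Section~\ref{symplectic-koszulity}'' because the Lagrangian decomposition on which that section relies is unavailable here.
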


\begin{proof}
 As above, one can assume that $K$ is a finite extension of $\Q$
or $\F_q(z)$.
 The case when $l$~is odd follows from the result of Section~4 and
\cite[Corollary~6.2(c)]{Pbogom}, so we will implicitly assume that
$l=2$ (though this is not necessary).

 Choose a set of exceptional valuations $S$ for the field~$K$
satisfying a slightly stronger condition than
in~\ref{exceptional-set}: namely, let it be additionally required
that the nonarchimedean valuations in $S$ generate the extended
class group of~$K$ (i.~e., the class group defined taking into
account the signs of elements of $K^*$ at the real valuations).
 Let $K_S^+\sub K_S$ and $K^+\sub K^*$ denote the subgroups of all
elements that are positive at all the real valuations.
 Then one has $K^*=K^+K_S$.

 For each nonarchimedean valuation $s\in S$ pick an element
$w_s\in K_s^*/K_s^{*l}$ orthogonal to the class of~$-1$ in
$K_s^*/K_s^{*l}$ with respect to the pairing $\{{-},{-}\}_s$,
and consider $w_s$ as an element of $W_S$.
 We need the pairings with the elements $w_s$ to define nonzero
linear functions $K_S^+/K_S^l\rarrow\mu_l$.
 Enlarging, if it be necessary, the set $S$, one can always choose
such elements~$w_s$.

 Indeed, one only has to use the weak approximation
theorem~\cite[Section~II.6]{CF} in order to find a finite set
of valuations $S'\supset S$ such that the pairings with the given
elements $w_s$ define nonzero linear functions $K_{S'}^+/K_{S'}^l
\rarrow\mu_l$ for all $s\in S$.
 Now for any $u\in S'\setminus S$ there is an element $d\in K_{S'}^+$
with the logarithmic valuation $u(d)$ not divisible by~$l$, because
nonarchimedean valuations in $S$ generate the extended class group.
 Since $u$~is a nonarchimedean valuation not lying over~$l$,
choosing $w_u$ to be the class of an element $b\in K_u^*$ with
with $u(b)=0$ and $b\notin K_u^{*l}$ guarantees $\{-1,w_u\}_u=0$ and
$\{w_u,d\}_u\ne0$, as desired.

 Let $p$~be a valuation of $K$ outside $S$ such that the Frobenius
element of~$p$ in $\Gal(K[\sqrt[l]{K_S}]/K)$ is trivial.
 Then by Lemma~\ref{exceptional-set}.3 there exists an element
$a_p\in K^*$ whose divisor outside $S$ is equal to~$p$ and
whose image in $W_S$ is zero.
 For each nonarchimedean valuation $u\in S$, pick a valuation~$q_u$
outside $S$ whose Frobenius in $\Gal(K[\sqrt[l]{K_S}]/K)$ as a linear
function $K_S\rarrow\mu_l$ is equal to the pairing with~$w_u$, while
the Frobenius in $\Gal(K[\sqrt[l]{a_p}]/K)$ is nontrivial.
 All the valuations~$q_u$ must be different.
 By the same lemma, there exists an element $a_{q_u}\in K^*$ whose
divisor outside $S$ is equal to~$q_u$ and whose image in $W_S$
is equal to~$w_u$.
 By the definition, the elements $a_p$ and $a_{q_u}$ belong to
$K^+$, and one has $\{a_p,a_p\}=0$ in $\KM_2(K)/l$.

 For each valuation~$r$ of the field $K$ outside of $S$, \ $p$,
and~$q_u$, choose an element $a_r\in K^+$ whose divisor outside
of~$S$ is equal to~$r$.
 Let us denote by~$r'$ those valuations~$r$ whose Frobenius element
in $\Gal(K[\sqrt[l]{a_p}]/K)$ is nontrivial and by~$r''$ the remaining
ones.
 For each real valuation~$v$ pick an element $a_v\in K_S$ that is
negative at~$v$ and positive at all the other real valuations.
 Choose any basis $k_j$ in $K_S^+/K_S^l$.

 Consider the following well-ordered basis of $K^*/K^{*l}$
$$
 a_p,\ a_{q_u},\ k_j,\ a_{r'},\ a_{r''},\ a_v,
$$
where the order within each group can be arbitrary.
 Consider the related $\Gamma$\+valued filtration $F$ on $\KM(K)/l$
and the associated quotient algebra $\gr^F\.\KM(K)/l$.
 The set of surviving quadratic monomials $T$ consists of
all the monomials $a_p a_{q_u}$ and $a_p a_{r'}$, some monomials
of the form $a_{q_u} k_j$ (exactly one such monomial for every~$u$),
some monomials of the forms $a_{q_u}a_{r''}$, \ $k_ja_{r''}$,
or $a_{r'}a_{r''}$ (exactly one monomial of one of these forms for
every~$r''$), and all the monomials $a_v^2$.

\medskip
 To prove these assertions, introduce the notion of
the \emph{support} of an element $\alpha\in\KM_2(K)/l$, defined
as the set of all valuations $y$ such that the image of~$\alpha$
in $\KM_2(K_y)/l$ is nontrivial.
 The subspace of $\KM_2(K)/l$ consisting of all the elements supported
inside a set of valuations $Y$ has the dimension equal to the number of
noncomplex valuations in $Y$ minus one (see~\ref{with-root}).

 Let us discuss all the quadratic monomials in our basis in the order
of their increase.
 The product $\{a_p,a_{q_u}\}$ is nonzero in $\KM_2(K)/l$ and
supported in $p$ and~$q_u$.
 Likewise, the product $\{a_p,a_{r'}\}$ is nontrivial and supported
in $p$ and~$r'$.
 Taken together, these products generate the whole subspace of all
elements supported inside the set of valuations~$p$, \ $q_u$, and~$r'$.
 Every element divisible by~$a_p$ in $\KM_2(K)/l$ is supported
inside this set, hence the products $\{a_p,a_v\}$ are linear
combinations of smaller monomials with respect to our ordering.
 The products $\{a_p,k_j\}$ and $\{a_p,a_{r''}\}$ vanish
in $\KM_2(K)/l$.

 A product of the form $\{a_{q_{u_1}},a_{q_{u_2}}\}$, where $u_1$ and
$u_2$ belong to the set of valuations~$u$, is supported inside
the set of two valuations $q_{u_1}$ and $q_{u_2}$, so it is a linear
combination of smaller monomials.
 Indeed, this holds for $u_1\ne u_2$, since $\{w_{u_1},w_{u_2}\}_s=0$
for all $s\in S$, and one actually has $\{a_{q_u},a_{q_u}\}=0$ in
$\KM_2(K)/l$ for $u_1=u=u_2$, because $\{w_u,w_u\}_u=\{-1,w_u\}_u=0$.

 A product of the form $\{a_{q_u},k_j\}$ either vanishes or is supported
in $u$ and~$q_u$, and there exists at least one nonvanishing product
of such form for every~$u$.
 Taken together with the monomials containing $a_p$, this product
generates the subspace of all elements supported inside the set of
valuations in the above list together with the valuation~$u$.
 The products $\{a_{q_u},a_{r'}\}$ and $\{a_{q_u},a_v\}$ are contained
in this subspace, so they are linear combinations of smaller monomials.
 Taken together for all~$u$, the products we have mentioned up to
this point generate the subspace of all elements supported inside
the set of valuations~$p$, \ $q_u$, \ $r'$, and all the nonarchimedean
valuations from~$S$.
 The products $\{k_{j_1},k_{j_2}\}$, \ $\{k_j,a_{r'}\}$, \
$\{k_j,a_v\}$, \ $\{a_{r'_1},a_{r'_2}\}$, and $\{a_{r'},a_v\}$ are
contained in this subspace, so they are also linear combinations of
smaller monomials.

 The product $\{a_{q_u},a_{r''}\}$ is supported inside the set of
three valuations~$u$, \ $q_u$, and $r''$, so one can easily see
that at most one such product belongs to the set of surviving
monomials~$T$, and this can only happen if the support of this
product contains~$r''$.
 On the other hand, recall that it is only the $\Gamma_1$\+valued
filtration on $\KM_1(K)/l$ rather than the basis itself that
determines the set~$T$. 
 The filtration does not change if we assume that for those
valuations~$r'$ whose Frobenius element is trivial in
$\Gal(K[\sqrt[l]{K_S}]/K)$ the element $a_{r'}$ is chosen in such
a way that its image in $W_S$ is trivial.

 By Chebotarev's density theorem applied to the field extension
$K[\sqrt[l]{K_S,a_p,a_{r''}}]/K$, for every valuation~$r''$ there
exists a valuation~$r'$ with the above property such that
the Frobenius element of~$r'$ in $\Gal(K[\sqrt[l]{a_{r''}}]/K)$
is nontrivial.
 Then the product $\{a_{r'},a_{r''}\}$ in $\KM_2(K)/l$ is supported
in $r'$ and~$r''$, and nonzero.
 For every~$r''$, the set $T$ contains the minimal of the products
$\{a_{q_u},a_{r''}\}$, \ $\{k_j,a_{r''}\}$, and $\{a_{r'},a_{r''}\}$
whose support contains~$r''$; the above argument shows that such
a monomial exists.

 Taken together, the products listed up to this point generated
the whole subspace of all elements in $\KM_2(K)/l$ supported
outside of the real valuations~$v$.
 The products $\{a_{r''_1},a_{r''_2}\}$, \ $\{a_{r''},a_v\}$, and
$\{a_{v_1},a_{v_2}\}$ for $v_1\ne v_2$ belong to this subspace,
so they are linear combinations of smaller monomials.
 The support of the product $\{a_v,a_v\}$ contains~$v$ and does not
contain any other real valuations, so all the monomials of this type
belong to~$T$.

 The set/graph $T$ contains no triangles and no monomials divisible
by $a_v$ except $a_v^2$.
 The elements $a_v^n$ are obviously linearly independent in
$\KM_n(K)/l$ for all $n\ge1$, so one readily checks that
the algebra $\gr^F\.\KM(K)/l$ is quadratic.
 Consequently the algebra $\KM(K)/l$ is Koszul
(see \ref{filtered-algebras-modules}
and~\ref{algebras-koszulity}).

\medskip
 Alternatively, one can write after the elements $a_{q_u}$ in
the above well-ordering the elements $a_{q'}$ with zero images
in $W_S$ corresponding to the valuations $q'$ outside of $S$ and~$p$
whose Frobenius elements in $\Gal(K[\sqrt[l]{K_S}]/K)$ are trivial
and in $\Gal(K[\sqrt[l]{a_p}]/K)$ are nontrivial.
 In this approach, one does not introduce the distinction between
$r'$ and~$r''$, but instead excludes the valuations~$q'$ from
the list of valuations~$r$.
 Then the set of surviving quadratic monomials $T$ will consist of
all the monomials $a_p a_{q_u}$ and $a_p a_{q'}$, some monomials
of the form $a_{q_u} k_j$ (exactly one such monomial for every~$u$),
some monomials of the forms $a_p a_r$, \ $a_{q_u} a_r$, or
$a_{q'}a_r$ (exactly one monomial of one of these forms for
every~$r$), and all the monomials $a_v^2$.
\end{proof}

\Section{Koszulity of Annihilator Ideals}

 Let $l$ be a prime number, $K$ be an algebraic extension of $\Q$
or $\F_q(z)$ containing a primitive $l$\+root of unity, and
$c\in K^*/K^{*l}$ be an element such that $\{c,c\}=0$ in
$\KM_2(K)/l$.
 In particular, when $l$ is odd, or $l=2$ and $K$ contains a square
root of~$-1$, the element~$c$ can be arbitrary.

\begin{thm}
 The ideal $(c)=c\.\KM(K)/l\sub\KM(K)/l$ is a Koszul module over
the Koszul algebra $\KM(K)/l$.
\end{thm}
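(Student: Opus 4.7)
The plan is to construct a commutative PBW-basis of the algebra $\KM(K)/l$ whose minimal generator is the class of~$c$; by Subsection~\ref{annihilator-koszul}, this will immediately yield Koszulity of the ideal~$(c)$ as a $\KM(K)/l$\+module. The first step is the usual reduction by inductive limit to the case where $K$ is a finite extension of~$\Q$ or~$\F_q(z)$; the case $c=0$ is trivial and can be set aside.

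When $l$~is odd, or $l=2$ and $K$~contains a square root of~$-1$, I would follow the construction used in the proof of Theorem~\ref{symplectic-koszulity}. After enlarging the exceptional set~$S$ so that the divisor of~$c$ is supported in~$S$, I would apply Lemma~\ref{symplectic-case} while arranging that the element~$c$ itself appears among the basis vectors~$b_i$ of the Lagrangian subspace $K_S/l\sub W_S$. Placing~$c$ at the bottom of the resulting well-ordered basis of $K^*/K^{*l}$, the enumeration of surviving quadratic monomials in $\gr^F\KM(K)/l$ proceeds exactly as in Section~\ref{symplectic-koszulity}; the hypothesis $\{c,c\}=0$ (automatic in this case) guarantees that the monomial $c\cdot c$ does not appear in the surviving set, which is what is needed so that~$c$ may legitimately be the minimal PBW-generator.

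In the remaining case, $l=2$ and $K$ does not contain $\sqrt{-1}$, I would adapt the argument of Theorem~\ref{general-koszulity}, taking~$c$ to play the role of the auxiliary element~$a_p$ there. The properties of $a_p$ that were actually used are $\{a_p,a_p\}=0$ together with the triviality of its Frobenius at~$p$ on $\Gal(K[\sqrt[l]{K_S}]/K)$. The first property is precisely the hypothesis $\{c,c\}=0$. The second is replaced by the condition that the auxiliary valuations $q_u$, $r$, and~$q'$ of Section~\ref{general-koszulity} have trivial Frobenius on $\Gal(K[\sqrt[l]{c}]/K)$; valuations with this property exist in infinite supply by Chebotarev density. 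The remaining basis elements $k_j$, $a_{q_u}$, $a_r$, and~$a_v$ are then selected as in Section~\ref{general-koszulity}.

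The hard part will be the real-valuation bookkeeping in the last case: the representative of~$c$ need not lie in~$K^+$, so its interaction with the sign elements~$a_v$ must be controlled. I expect to resolve this by inserting into the well-ordering, \emph{before}~$c$, the elements~$a_v$ corresponding to those real valuations~$v$ at which~$c$ is negative, so that multiplication of~$c$ by a suitable element of~$K^{*l}$ effectively positions~$c$ in~$K^+$ for the purposes of the support-counting analysis. Once the quadraticity of $\gr^F\KM(K)/l$ is verified by the same support argument as in Section~\ref{general-koszulity}, the result of Subsection~\ref{filtered-algebras-modules} upgrades this to quadraticity of $\KM(K)/l$ itself, and Subsection~\ref{annihilator-koszul} applied with $x_0=c$ concludes that the ideal~$(c)$ is a Koszul $\KM(K)/l$\+module.
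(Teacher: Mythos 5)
Your overall plan --- produce a commutative PBW-basis of $\KM(K)/l$ with $c$ as the minimal generator, then invoke Subsection~\ref{annihilator-koszul} --- is exactly the strategy of the paper. The difficulties are in the two constructions you propose, both of which have genuine gaps that the paper's actual argument avoids by not splitting into cases at all.

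For the symplectic case, you want to arrange that $c$ appears among the Lagrangian basis vectors $b_i$ of Section~\ref{symplectic-koszulity}. This is not always possible. By construction, each $b_i$ pairs nontrivially with exactly one of the transversal Lagrangians $M_v$, i.e.\ the image of $b_i$ in $K_v^*/K_v^{*l}$ lies in $M_v$ for all $v\ne v(i)$. If the Lagrangian $L=K_S/l$ happens to decompose as an orthogonal sum $L_1\oplus L_2$ with $L_j\sub W_{v_j}$, and $c=c_1+c_2$ has both components nonzero, then any choice of $M_v$ complementary to $L$ must satisfy $M_{v_j}\cap L_j=0$, which forces $c_j\notin M_{v_j}$, so $c$ pairs nontrivially with \emph{both} $M_{v_1}$ and $M_{v_2}$ and cannot be aligned with the decomposition. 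Lemma~\ref{symplectic-case} gives existence of a complementary $\bigoplus M_v$ but not the alignment you need, and no enlargement of $S$ you describe resolves this.

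For the non-symplectic case, casting $c$ in the role of $a_p$ does not work either, because $a_p$ was an auxiliary element chosen with zero image in $W_S$ and divisor supported at a single valuation~$p$ outside~$S$, whereas $c$ --- once $S$ is chosen to contain $\operatorname{div}(c)$ --- lies in $K_S$ (so its divisor outside $S$ is trivial) and will typically have nonzero image in $W_S$. Its role is that of one of the $k_j$'s, not $a_p$. Furthermore, the ``hard part'' you anticipate about real valuations is illusory: $\{c,c\}=\{-1,c\}=0$ forces the local symbols $\{-1,c\}_v$ to vanish at every real $v$, so $c$ already lies in $K^+$ modulo squares. Worse, the fix you propose --- inserting the elements $a_v$ \emph{before} $c$ in the well-ordering --- would make $c$ no longer the minimal PBW-generator, destroying the very thing Subsection~\ref{annihilator-koszul} requires.

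The paper handles all cases uniformly by keeping the entire auxiliary apparatus of Section~\ref{general-koszulity} ($a_p$, $a_{q_u}$, $k_j$, $a_{r'}$, $a_{r''}$, $a_v$), noting $c\in K_S^+/K_S^l$, choosing the $k_j$ to complement $c$ to a basis of $K_S^+/K_S^l$, and placing $c$ at the very bottom of the well-ordering $c,\ a_p,\ a_{q_u},\ k_j,\ a_{r'},\ a_{r''},\ a_v$. The elements $w_u$ (and hence $q_u$) are chosen only for those $u\in S$ at which $c$ is locally an $l$-th power, so that $\{c,a_p\}=\{c,a_{q_u}\}=0$; the support analysis then runs through the set $Y$ of valuations where $c$ is not a local $l$-th power. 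This bookkeeping around $Y$ is the ingredient your proposal is missing.
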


\begin{proof}
 The argument below is a variation of the proof in
Section~\ref{general-koszulity}.
 As above, we can assume that $K$ is finite over $\Q$ or $\F_q(z)$;
we can also assume that $c\ne 0$ in $K^*/K^{*l}$.

 Choose a set of exceptional valuations $S$ for the field $K$
satisfying the conditions of Section~\ref{general-koszulity} and
containing the divisor of the element~$c$.
 Choose an element $a_p\in K^*$ whose divisor outside $S$ is equal to
a certain valuation~$p$ and whose image in $W_S$ is zero.
 For each nonarchimedean valuation $u\in S$ such that the image of~$c$
in $K_u^*/K_u^{*l}$ is zero, choose an element $w_u\in K_u^*/K_u^{*l}$
such that $\{w_u,-1\}_u=0$ and the pairing with $w_u$ is
a nonzero linear function $K_S^+/K_S^l\rarrow\mu_l$.
 Pick an element $a_{q_u}\in K^*$ whose divisor outside $S$ is equal
to a certain valuation~$q_u$ and whose image in $W_S$ is equal
to~$w_u$.
 We also need the Frobenius element of $q_u$ to be nontrivial in
$\Gal(K[\sqrt[l]{a_p}]/K)$ and all the valuations $q_u$ to
be different.
 Clearly, one has $a_p$ and $a_{q_u}\in K^+$ and $\{c,a_p\}=
\{c,a_{q_u}\}=\{a_p,a_p\}=0$ in $\KM(K)/l$.

 For each valuation~$r$ outside of $S$, \ $p$ and~$q_u$, choose
an element $a_r\in K^+$ whose divisor outside of $S$ is equal to~$r$.
 Denote by~$r'$ those valuations~$r$ whose Frobenius element in
$\Gal(K[\sqrt[l]{a_p}]/K)$ is nontrivial and by~$r''$ the remaining
ones.
 For each real valuation~$v$ pick an element $a_v\in K_S$ that is
negative at~$v$ and positive at all the other real valuations.
 Notice that $c\in K_S^+/K_S^l$; let elements $k_j\in K_S^+$
complement the element~$c$ to a basis of $K_S^+/K_S^l$.

 Consider the following well-ordered basis of $K^*/K^{*l}$
$$
 c,\ a_p,\ a_{q_u},\ k_j,\ a_{r'},\ a_{r''},\ a_v,
$$
where the ordering within each group can be arbitrary.
 The related set $T$ of surviving quadratic monomials consists of
all the monomials $a_p a_{q_u}$, some monomials of the forms $c k_j$
and $c a_{r'}$, some monomials of the form $a_{q_u}k_j$ (exactly one
such monomial for every valuation~$u$), some monomials $a_p a_{r'}$,
some monomials of the forms $c a_{r''}$, \ $a_{q_u} a_{r''}$, \
$k_j a_{r''}$, or $a_{r'}a_{r''}$ (exactly one monomial of one
of these forms for every~$r''$), and all the monomials $a_v^2$.

\medskip
 Let us prove these assertions.
 Denote by $Y$ the set of all valuations $y$ of $K$ for which
$c\in K_y^*\setminus K_y^{*l}$.
 First we will have to show that the products $\{c,k_j\}$ and
$\{c,a_{r'}\}$ generate the subgroup of all elements in $\KM_2(K)/l$
supported inside the set of those valuations $y\in S$ or $y=r'$ that
belong to~$Y$.
 Specifically, for every element $w\in W_S$ such that $(c,w)_S=0$
let us consider a valuation $r'$ such that the image of $a_{r'}$
in $W_S$ belongs to $w+K_S/l$.
 Then $\{c,a_{r'}\}_{r'}=0$, hence $r'\notin Y$, and the support of
$\{c,a_{r'}\}$ is contained in~$S$.

 The products $\{c,a_{r'}\}$ for such valuations~$r'$ generate
the subgroup of all elements in $\KM_2(K)/l$ supported inside the set
of valuations $S\cap Y$.
 Indeed, the subspace of vectors of the form $(\{c_s,w_s\}_s)_{s\in S}$
in $\bigoplus_{s\in S}\mu_l$, where $w=(w_s)_{s\in S}$ runs over all
the elements in $W_S$ for which $\sum_{s\in S}\{c_s,w_s\}_s=0$,
consists precisely of those vectors that belong to the kernel of
the summation map $\bigoplus_{s\in S}\mu_l\rarrow\mu_l$
and are supported inside the set of all places $s\in S$ at which
the component $c_s\in K_s^*/K_v^{*l}$ is nonzero.
 On the other hand, for a valuation $r'$ belonging to $Y$ the support
of the product $\{c,a_{r'}\}$  is contained in $S\cup\{r'\}$ and
contains~$r'$.

 The product $\{c,a_{r''}\}$ is supported inside $S$ and~$r''$, so
it belongs to the set of surviving monomials $T$ if and only if
its support contains~$r''$, that is $r''\in Y$.
 The products $\{c,k_j\}$, \ $\{c,a_{r'}\}$, and $\{c,a_{r''}\}$
generate the subgroup of all elements supported inside the set $Y$,
so the products $\{c,a_v\}$ are linear combinations of smaller
monomials in the ordering.
 The products $\{a_p,k_j\}$ and $\{a_p,a_{r''}\}$ vanish in
$\KM_2(K)/l$.
 The product $\{a_p, a_{q_u}\}$ is nonzero and supported in the two
valuations $p$ and~$q_u$, which do not belong to~$Y$; hence this
monomial belongs to~$T$.
 Likewise, the product $\{a_p,a_{r'}\}$ is nontrivial and supported
in $p$ and~$r'$, hence it belongs to $T$ whenever $r'\notin Y$;
of all the products $\{a_p,a_{r'}\}$ with $r'\in Y$, it is only
the smallest one that belongs to~$T$.

 Taken together, the products mentioned up to this point generate
the subgroup of $\KM_2(K)/l$ supported inside the set of
valuations $p$, \ $q_u$, \ $r'$, and all valuations from~$Y$.
 The rest of the argument is very similar to the one in
Section~\ref{general-koszulity}, the only difference being that
all the valuations from $Y$ have been already ``covered''.

\medskip
 The set/graph $T$ contains no triangles and no monomials divisible
by $a_v$ except $a_v^2$, so one readily checks that the algebra
$\gr^F\.\KM(K)/l$ is quadratic.
 By the result of~\ref{annihilator-koszul}, the ideal
$(c)\sub\KM(K)/l$ is a Koszul module over $\KM(K)/l$.
\end{proof}

\Section{Fields without the Root of Unity}

 Let $K$ be an algebraic extension of $\Q$ or $\F_q(z)$ and $l$~be
a prime number such that either $l=\chr K$ or $K$ contains no
primitive $l$\+root of unity.
 Let $\Lambda(K,l)$ be the exterior algebra generated by
$\Lambda_1(K,l)=K^*/K^{*l}$ and $J_K\sub\Lambda(K,l)$ be the kernel
of the map of graded algebras $\Lambda(K,l)\rarrow\KM(K)/l$.
 Let $c\in\KM_1(K)/l$ be an element and $(c)\sub\KM(K)/l$ be
the ideal generated by~$c$.

\begin{thm}
 The ideal $J_K$ is a Koszul $\Lambda(K,l)$\+module (in
the grading shifted by~$1$).
 The ideal~$(c)$ is a Koszul module over a Koszul algebra
$\KM(K)/l$.
\end{thm}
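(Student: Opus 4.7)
The plan is to reduce to finite extensions of $\Q$ or $\F_q(z)$ via the usual inductive limit of subfields containing the generators needed to represent $c$, using that all Koszul properties at stake pass through such colimits. Then the two clauses of the hypothesis are treated separately.

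When $l = \chr K$, one has $\KM_n(K)/l = 0$ for $n \geq 2$ by~\ref{equal-char}, so $\KM(K)/l = \Z/l \oplus K^*/K^{*l}$ with zero multiplication in positive degrees. This algebra is then a commutative (or supercommutative) quadratic monomial algebra with empty graph $T$, hence Koszul by Proposition~\ref{relations-koszul}(1). The ideal $J_K$ coincides with $\Lambda_{\geq 2}(K,l)$, and the positive truncation of any Koszul algebra is a Koszul module over itself --- the fact already invoked in the proof of Theorem~\ref{local-koszulity}.1. The ideal $(c)$ is at most one-dimensional, concentrated in degree~$1$, with trivial $\KM_+(K)/l$-action, so its $\Tor$ with $\Z/l$ is supported on the line $j = i + 1$.

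The remaining case $l \neq \chr K$ with $\mu_l \not\subset K$ forces $l$ odd, since $\mu_2 \subset K$ whenever $\chr K \neq 2$. By~\ref{without-root}, $\KM_n(K)/l = 0$ for $n \geq 3$ and $\KM(K)/l$ is exterior-generated in degree~$1$. The strategy is to construct a well-ordered basis of $K^*/K^{*l}$ whose associated $\Gamma$-valued filtration on $\KM(K)/l$ has graded quotient a commutative quadratic monomial algebra, and then invoke Proposition~\ref{relations-koszul}: acyclicity of the graph $T$ of nonzero quadratic monomials will give Koszulity of $J_K$, absence of triangles in $T$ will give quadratic Koszulity of the algebra, and placing $c$ as the minimal basis element together with~\ref{annihilator-koszul} will give Koszulity of~$(c)$. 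To build the basis, fix a finite set $S'$ of valuations of $K$ containing the archimedean places, those above $l$, the divisor of $c$, and generating the class group; write $L = K(\zeta_l)$ with $[L:K]$ coprime to $l$ and $S$ for the corresponding set of valuations of $L$. Apply Lemma~\ref{symplectic-case} to the symplectic space $W_S$ with its Lagrangian $L_S/l$ to obtain complementary Lagrangians $M_v \subset L_v^*/L_v^{*l}$, and then use the existence Lemma from~\ref{without-root} --- which plays for the non-$\mu_l$ setting the role that Lemma~\ref{exceptional-set}.3 plays in the symplectic case --- to lift selected elements of $W'_{S'}$ to global elements $a_p \in K^*$ with divisor a single valuation $p \notin S' \cup V_0$, where $V_0$ is the set of nonarchimedean places $v$ with $\mu_l \subset K_v$.

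The main obstacle is the acyclicity verification for the graph $T$, carried out in the style of the case analyses of Sections~\ref{symplectic-koszulity} and~\ref{general-koszulity}. This amounts to tracking, for each pair of basis vectors $x, y$, the local support of $\{x,y\} \in \KM_2(K)/l \simeq \bigoplus_{v \in V_0} \mu_{l,v}$, and arranging that each such support reduces to a single valuation naturally associated with exactly one of $x$ or $y$. The Chebotarev-style freedom in the Lemma from~\ref{without-root} --- the ability to prescribe Frobenius elements in auxiliary Kummer extensions such as $K[\sqrt[l]{a_p}]/K$ --- is what makes such disentanglement possible; and because $l$ is odd in this case, the compatibility problem between PBW-bases of $\Lambda(K,l)$ and $\KM(K)/l$ flagged in the Remark after Section~\ref{symplectic-koszulity} does not arise.
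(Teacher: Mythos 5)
Your reduction to finite extensions, the handling of the $l=\chr K$ case, the observation that $l$ must be odd when $\mu_l\not\subset K$ and $l\ne\chr K$, and the identification of the right target (a well-ordered basis of $K^*/K^{*l}$ whose associated monomial quotient has acyclic/triangle-free graph $T$, combined with Proposition~\ref{relations-koszul} and~\ref{annihilator-koszul}) are all correct and match the paper's setup. However, the core of the argument is missing: you describe \emph{what} properties the basis should have and \emph{why} they would suffice, but you never actually construct it, and the one concrete ingredient you do name --- Lemma~\ref{symplectic-case} applied to the symplectic space $W_S$ over $L=K(\zeta_l)$ --- is not what the paper uses and introduces a descent problem you do not address. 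Lemma~\ref{symplectic-case} produces complementary Lagrangians $M_v\subset L_v^*/L_v^{*l}$, but these have no reason to be $\Gal(L/K)$-equivariant, and the basis you need lives in $K^*/K^{*l}$, not in $L^*/L^{*l}$; turning such Lagrangians into a usable $K$-rational structure is exactly the kind of work that would have to be done and is not done.

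The paper's actual construction sidesteps the symplectic decomposition entirely, by exploiting that $\KM_2(K)/l\cong\bigoplus_{v\in V_0}\mu_{l,v}$ with no constraint other than local contributions. For each nonarchimedean $u\in S'$ with $\mu_l\subset K_u$, it picks a hyperbolic pair $w'_u,w''_u\in K_u^*/K_u^{*l}$ and, via the Chebotarev-based Lemma in~\ref{without-root}, two valuations $p'_u,p''_u$ outside $S'$ with $\mu_l\not\subset K_{p'_u},K_{p''_u}$ carrying elements $a_{p'_u},a_{p''_u}$ realizing that pair in $W'_{S'}$ --- so the surviving monomial $a_{p'_u}a_{p''_u}$ is supported exactly at~$u$. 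For each remaining valuation $r\notin S'$ with $\mu_l\subset K_r$, a valuation $q(r)$ (again with $\mu_l\not\subset K_{q(r)}$) and an element $a_{q(r)}$ are chosen \emph{inductively} so that $a_{q(r)}$ is trivial in $W'_{S'}$ and at all previously treated $r'<r$ but nontrivial at $r$; the surviving monomial $a_{q(r)}a_r$ is then supported exactly at~$r$. This inductive ``covering'' of $V_0$, one valuation at a time, is the mechanism that makes the graph $T$ a disjoint union of single edges, and it is absent from your sketch. A parallel but modified construction, starting the ordering with $c$ and using $w_{u'}$ with $\{w_{u'},c\}_{u'}\ne0$ at the places where $c$ is locally nontrivial, is needed for the ideal $(c)$; again you state only that $c$ should be placed first. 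Without these constructions the proof is not complete.
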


\begin{proof}
 The case $l=\chr K$ is trivial (see~\ref{equal-char} and
the beginning of the proof of Theorem~\ref{local-koszulity}.1).
 It also suffices to consider the case when $K$ is finite over $\Q$
or $\F_q(z)$.
 
 Let $S'$ be an exceptional set of valuations of~$K$ satisfying
the conditions of~\ref{without-root}.
 To prove the first assertion of Theorem, for each nonarchimedean
valuation $u\in S'$ such that $K_u$ contains a primitive $l$\+root
of unity choose a pair of elements $w'_u$, $w''_u\in K_u^*/K_u^{*l}$
such that $\{w'_u,w''_u\}_u\ne0$ in $\KM_2(K_u)/l$.
 Using Lemma~\ref{without-root}, choose valuations $p'_u$ and $p''_u$
of $K$ such that all of them are different, do not belong to $S'$,
the completions $K_{p'_u}$ and $K_{p''_u}$ do not contain a primitive
$l$\+root of unity, and there exist elements $a_{p'_u}$ and
$a_{p''_u}\in K^*$ whose images in $W'_{S'}$ are equal to $w'_u$ and
$w''_u$, and whose divisors outside $S'$ are equal to $p'_u$
and~$p''_u$.

 Choose a numbering by nonnegative integers for all the valuations~$r$
of $K$ outside $S'$ such that $K_r$ contains a primitive $l$\+root
of unity, and order them according to this numbering.
 By induction on this order, choose for each such valuation~$r$
a valuation $q=q(r)$ outside of $S'$ and all $p'_u$, $p''_u$ such that 
$K_q$ does not contain a primitive $l$\+root of unity, the valuations
$q(r)$ are different for different~$r$, and there exists an element
$a_q\in K^*$ whose divisor outside $S'$ is equal to~$q$, whose image
in $W'_{S'}$ and in $K_{r'}/K_{r'}^{*l}$ is zero for all $r'<r$,
and whose image in $K_r/K_r^{*l}$ is nontrivial.
 The existence of such a valuation~$q(r)$ follows from
Lemma~\ref{without-root} applied to the set
$S'\cup\{\.r'\mid r' < r\.\}\cup\{r\}$ in place of~$S'$.

 Finally, for all the valuations $v$ of $K$ outside of the sets $S'$, \
$\{p'_u\}$, \ $\{p''_u\}$, and~$\{q(r)\}$ choose elements $a_v\in K^*$
whose divisors outside $S'$ are equal to~$v$.
 Let $k_j$ be any basis of $K_{S'}/K_{S'}^l$.
 Consider the following well-ordered basis of $K^*/K^{*l}$
$$
 a_{q(r)},\ a_{p'_u},\ a_{p''_s},\ k_j,\ a_v,
$$
where the ordering of $a_{q(r)}$ is according to the ordering of~$r$,
while the ordering within each of the other groups is arbitrary.
 The related set $T$ of surviving quadratic monomials consists of
the monomials $a_{q(r)}a_r$ and $a_{p'_u}a_{p''_u}$.
 Not only this graph does not contain any cycles, but there is even
no vertex adjacent to more than one edge.
 By Proposition~\ref{relations-koszul}(2), the ideal $J_K$ is Koszul.
 It follows that the graded algebra $\KM(K)/l$ is Koszul, too.

 Now let us prove the second assertion.
 We can assume that the element~$c$ is nonzero in $K^*/K^{*l}$.
 Let $S'$ be an exceptional set of valuations satisfying
the conditions of~\ref{without-root} and containing the divisor of~$c$. 
 For each nonarchimedean valuation $u'\in S'$ such that $K_{u'}$
contains a primitive $l$\+root of unity and the image of~$c$ in
$K_{u'}^*/K_{u'}^{*l}$ is nonzero, choose an element $w_{u'}\in
K_{u'}^*/K_{u'}^{*l}$ such that $\{w_{u'},c\}_{u'}\ne0$.
 For each of the remaining nonarchimedean valuations $u''\in S'$
such that $K_{u''}$ contains a primitive $l$\+root of unity,
choose a pair of elements $w'_{u''}$, $w''_{u''}\in K_{u''}$ such
that $\{w'_{u''},w''_{u''}\}_{u''}\ne0$.
 
 Choose valuations $p_{u'}$, \ $p'_{u''}$, and $p''_{u''}$ outside
of $S'$ such that all of them are different, the corresponding
completions do not contain a primitive $l$\+root of unity, and
there exist elements $a_{p_{u'}}$, \ $a_{p'_{u''}}$, and $a_{p''_{u''}}
\in K^*$ whose divisors outside $S'$ are equal to these valuations
and whose images in $W'_{S'}$ are equal to $w_{u'}$, \ $w'_{u''}$,
and $w''_{u''}$.
 For each valuation $r$ outside $S'$ such that $K_r$ contains
a primitive $l$\+root of unity, choose a valuation $q(r)$ outside
of $S'$, \ $p_{u'}$, \ $p'_{u''}$, and $p''_{u''}$ such that 
the valuations $q(r)$ are different for different~$r$, the completion
$K_{q(r)}$ does not contain a primitive $l$\+root of unity, and there
exists an element $a_{q(r)}\in K^*$ whose divisor outside $S'$ is equal
to $q(r)$ and whose image in $K_r^*/K_r^{*l}$ is nonzero.

 Finally, for all the valuations $v$ of $K$ outside of the sets $S'$, \
$\{p_{u'}\}$, \ $\{p'_{u''}\}$, \ $\{p''_{u''}\}$, and~$\{q(r)\}$
choose elements $a_v\in K^*$ whose divisors outside $S'$ are equal
to~$v$.
 Let elements $k_j\in K_{S'}/K_{S'}^l$ complement~$c$ to a basis of
$K_{S'}/K_{S'}^l$.
 Consider the following well-ordered basis of $K^*/K^{*l}$
$$
 c,\ a_{p_{u'}},\ a_{p'_{u''}},\ a_{p''_{u''}},\ a_{q(r)},\ k_j,\ a_v,
$$
where the ordering within each group can be arbitrary.
 Then the related set $T$ of surviving quadratic monomials consists
of all the monomials $c a_{p_{u'}}$ and $a_{p'_{u''}}a_{p''_{u''}}$
and some monomials of the forms $c a_v$, \ $a_{p_{u'}}a_v$, \
$a_{p'_{u''}}a_v$, \ $a_{p''_{u''}}a_v$, or $a_{q(r)}a_v$
(exactly one monomial of one of these forms for each valuation $v$
outside $S'$ such that $K_v$ contains a primitive $l$\+root of unity,
and no such monomial for all other valuations~$v$).
 The graph $T$ contains no triangles (and not even any cycles),
so the desired assertion follows from
Proposition~\ref{relations-koszul}(1) and the result
of~\ref{annihilator-koszul}.
\end{proof}

\bigskip

\end{document}